\documentclass[reqno, 11pt]{amsart}
\usepackage{dsfont}
\usepackage{amssymb}
\usepackage{mathrsfs}
\usepackage{enumitem}
\usepackage{comment}
\usepackage{microtype}
\usepackage[pdfstartview=FitH, pdfborder={0 0 0}, colorlinks=true, citecolor=blue, linkcolor=blue]{hyperref}
\usepackage{aliascnt}
\usepackage[backend=biber, style=alphabetic, url=false]{biblatex}
\theoremstyle{plain}
\newtheorem{theorem}{Theorem}[section]

\newaliascnt{conjecture}{theorem}

\aliascntresetthe{conjecture}

\newaliascnt{corollary}{theorem}

\aliascntresetthe{corollary}

\newaliascnt{lemma}{theorem}
\newtheorem{lemma}[lemma]{Lemma}
\aliascntresetthe{lemma}

\newaliascnt{fact}{theorem}

\aliascntresetthe{fact}

\newaliascnt{claim}{theorem}

\aliascntresetthe{claim}

\newaliascnt{proposition}{theorem}
\newtheorem{proposition}[proposition]{Proposition}
\aliascntresetthe{proposition}

\theoremstyle{definition}

\newaliascnt{definition}{theorem}

\aliascntresetthe{definition}

\newaliascnt{example}{theorem}

\aliascntresetthe{example}

\theoremstyle{remark}

\newaliascnt{remark}{theorem}
\newtheorem{remark}[remark]{Remark}
\aliascntresetthe{remark}

\numberwithin{equation}{section}
\newcommand{\sca}{\mathrm{Scal}}
\newcommand{\Ric}{\mathrm{Ric}}

\begin{document}
\title[Nonsmoothable collapsing limits]{Collapse to nonsmoothable manifolds under lower sectional and positive Ricci curvature bounds}
\author[J.~Ge]{Jian Ge}
\address[Ge]{School of Mathematical Sciences, Laboratory of Mathematics and Complex Systems, Beijing Normal University, Beijing 100875, P. R. China.}
\email{jge@bnu.edu.cn}
\thanks{JG is partially supported by the NSFC (No. 12371049) and the Fundamental Research Funds for the Central Universities.}

\author[C.~Qian]{Chao Qian}
\address[Qian]{School of Mathematics and Statistics, Beijing Institute of Technology, Beijing
100081, P.R. China}
\email{6120150035@bit.edu.cn}
\thanks{CQ is partially supported by the NSFC (No. 12371048).}

\subjclass[2020]{Primary 53C23; Secondary 53C20, 57Q15, 57R55}
\keywords{Gromov--Hausdorff collapse, Alexandrov space, Ricci curvature, Kirby--Siebenmann class, orbifold, hyperbolization}

\begin{abstract}
	We construct smooth collapsing sequences with sectional curvature at least $-1$ whose limits are topological manifolds with nonzero Kirby--Siebenmann class. For every $n\geq6$, the source and limit are simply connected, have dimensions $n+3$ and $n$, and the metrics have a common positive Ricci lower bound. The limits are homeomorphic to the product of the Chern four-manifold with $S^{n-4}$. We also obtain spin examples and five-dimensional lens-space homotopy types, the latter from finite quotients of one fixed $2$-connected eight-manifold. Finally, we realize aspherical manifolds with no closed PL representatives of their homotopy types as collapsing limits with scalar curvatures tending uniformly to $+\infty$, and establish an obstruction to their approximation with almost nonnegative Ricci curvature.
\end{abstract}
\maketitle

\section{Introduction}\label{sec:introduction}

Let $\{(M_i^n,g_i)\}$ be a sequence of closed Riemannian manifolds with $\sec(g_i)\geq-1$, converging in the Gromov--Hausdorff sense to a compact Alexandrov space $X^d$. When $d=n$, Perelman's stability theorem implies that $M_i^n$ and $X^d$ are eventually homeomorphic; see \cite[Theorem 1.1]{KapovitchStability2007}. Under collapse, the topology of the limit can change. Our main result constructs fixed simply connected smooth sources $M^{n+3}$ that collapse to simply connected topological manifolds $X^n$ with nonzero Kirby--Siebenmann class, while the metrics satisfy both a sectional curvature lower bound and a common positive Ricci lower bound. We identify the homeomorphism types of these limits, and extend the construction to spin manifolds and five-dimensional limits with finite cyclic fundamental groups.

For a topological manifold $X^d$ of dimension at least $5$, the Kirby--Siebenmann class
\begin{equation*}
	\operatorname{ks}(X^d)\in H^4(X^d;\mathbb Z/2)
\end{equation*}
vanishes precisely when $X^d$ admits a PL structure; see \cite[Section 3.8]{Quinn}. Its nonvanishing therefore excludes every smooth structure. The regularity of Alexandrov limits also involves the geometry of the limiting metric: Kapovitch constructed Alexandrov metrics on topological spheres that admit no same-dimensional smooth approximation with a common sectional curvature lower bound \cite[Example 1.5]{KapovitchRegularity2002}.

We use $M^d$ for manifolds and $X^d$ for Alexandrov spaces, with superscripts indicating real dimensions; smoothness is specified when needed. We write $\simeq$ for homotopy equivalence and $\cong$ for homeomorphism; diffeomorphisms are stated explicitly. For algebraic objects, $\cong$ denotes isomorphism. We write the acting group on the left or right of a quotient according to the side of its action.

Let $M_{\mathrm{Ch}}^4$ denote the Chern manifold, also written ${}^*\mathbb{CP}^2$: the closed simply connected oriented topological four-manifold with intersection form $\langle1\rangle$ and nonzero Kirby--Siebenmann class. Freedman's classification determines it up to homeomorphism, and $M_{\mathrm{Ch}}^4\simeq\mathbb{CP}^2$; see \cite[Theorems 1.5 and 1.8]{Freedman1982}.

\begin{theorem}\label{thm:simply-connected}
	There exists a constant $c>0$ such that, for every $n\geq6$, there are a closed simply connected smooth manifold $M^{n+3}$ and metrics $g_j$ on $M^{n+3}$ satisfying
	\begin{equation*}
		\sec(g_j)\geq-1,\qquad \Ric(g_j)\geq c\,g_j,
	\end{equation*}
	and $(M^{n+3}, g_j)$ converges in the Gromov--Hausdorff sense to a compact Alexandrov space $X^n$ whose underlying space is a closed simply connected topological manifold with
	\begin{equation*}
		X^n\cong M_{\mathrm{Ch}}^4\times S^{n-4},\qquad \operatorname{ks}(X^n)\ne0.
	\end{equation*}
	In particular, $X^n$ admits no PL structure and no smooth structure.
\end{theorem}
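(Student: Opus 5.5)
Our plan is to realize $X^n$ as the quotient of a free isometric action of a compact Lie group of dimension $3$ on a fixed smooth manifold, and to collapse by shrinking the orbits. The standard mechanism is this. If a compact group $G$ acts isometrically on $(M,g)$, consider the metrics $g_\varepsilon$ obtained by scaling the orbit directions by $\varepsilon^2$; equivalently, take the quotient of $(M\times G, g+\varepsilon^{-2}g_{\mathrm{bi}})$ by the diagonal action (the Cheeger deformation). These metrics have $\sec\geq\sec(g)$ restricted suitably. More precisely, Cheeger deformation does not decrease sectional curvature lower bounds when $\sec(g)\ge0$. In general it preserves $\sec\geq-1$ after a fixed rescaling. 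The metrics $g_\varepsilon$ converge in the Gromov--Hausdorff sense to $M/G$ with its quotient metric, which is an Alexandrov space of curvature $\geq-1$. We take $G=SU(2)\cong S^3$, which explains the dimension gap $3$. Positive Ricci curvature along the shrinking fibers is then automatic: the fibers are round $S^3$'s of size $\varepsilon$, so their intrinsic Ricci curvature is of order $\varepsilon^{-2}$. By O'Neill's formulas, horizontal Ricci is bounded below by the base Ricci up to $A$-tensor terms, which only help on the base. Hence a uniform positive Ricci lower bound reduces to a positive Ricci lower bound on the limit in a suitable averaged sense. Since the limit is only a topological manifold, we will not use a smooth base. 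Instead, $M^{n+3}$ carries a smooth metric whose quotient is merely an Alexandrov metric.

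The key step is to produce a smooth simply connected $M^{n+3}$ with a free $S^3$-action whose orbit space is homeomorphic to $M_{\mathrm{Ch}}^4\times S^{n-4}$. This is possible only because orbit spaces of free smooth actions are smooth. So the action must be free only topologically, or smooth but non-free with orbit space a topological manifold. We expect a smooth $S^3$- or $SO(3)$-action with isotropy such that the quotient is a manifold whose smoothness fails at the singular strata; for instance, cone-type strata with link a homology sphere bounding a contractible manifold. Concretely, we plan the following. Write $M_{\mathrm{Ch}}^4=(\text{plumbing/}E_8\text{-type piece})\cup_{\Sigma}$ cone on a Poincaré-type homology sphere, i.e., the Chern manifold is a smooth manifold with boundary a homology $3$-sphere $\Sigma$ with a cone attached. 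Taking the product with $S^{n-4}$ and using the double suspension theorem (Cannon--Edwards), the join $\Sigma * S^{n-5}$ is a sphere. So $X^n$ becomes a topological manifold presented as a smooth manifold with boundary $\Sigma\times S^{n-4}$ glued to $\Sigma$-cone$\times S^{n-4}$. We realize $\Sigma=S^3/\Gamma$ with $\Gamma\subset SU(2)$ binary icosahedral. The cone on $\Sigma$ times a disc is the quotient of $S^3\times D^4$ (or of $\mathbb R^4\times\cdots$) by an $S^3$-action with isotropy $\Gamma$. The condition $n\ge6$ enters here, for the double suspension.

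Metrically, we put $G$-invariant metrics with $\sec\geq0$ (or $\ge-1$) on each piece, gluing with product collars. The positive Ricci bound comes from the fiber contribution together with a Cheeger deformation. The step we expect to be the main obstacle is the simultaneous requirement of a smooth simply connected total space $M^{n+3}$ glued from these pieces, with an invariant metric of $\sec\ge-1$ across the gluing region. We would first try to take the exterior piece as a principal $S^3$-bundle over the smooth part (the bundle extends since $\Sigma$ bounds and bundles over $\Sigma\times S^{n-4}$ are controlled by $\pi_3$). We would compute $\pi_1(M)=1$ by the long exact sequence of the action, using that the quotient is simply connected and the orbits are connected. Finally, $\operatorname{ks}(X^n)=\operatorname{ks}(M_{\mathrm{Ch}}^4)\times 1\neq0$ follows by naturality of the Kirby--Siebenmann class under products.
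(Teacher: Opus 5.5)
Your architecture matches the paper's: a locally free $SU(2)$-action whose only isotropy is $I^*$, the local model $I^*\backslash(D^4\times G)$ glued to $W\times G$, orbit shrinking, the double suspension theorem, and a product with $S^{n-4}$. But the positive Ricci step fails as you set it up, and this is where the real input lies.

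For the shrinking connection metrics $g_t=\pi^*q+t^2g_G(A,A)$ the fibres are totally geodesic. So for a horizontal unit vector $X$, O'Neill's formula gives $\Ric_{g_t}(X,X)=\Ric_q(X,X)-2|A_X|^2\le\Ric_q(X,X)$. The $A$-tensor terms hurt rather than help. Since they are $O(t^2)$, the horizontal Ricci curvature converges to that of the quotient orbifold metric $q$. The fibre curvature of order $t^{-2}$ controls only the vertical directions.

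You cannot avoid this by treating the quotient as ``merely Alexandrov'': the quotient of a smooth invariant metric under a locally free action is always a smooth orbifold metric. Hence a uniform bound $\Ric\ge c>0$ requires an orbifold metric with pointwise positive Ricci curvature on the four-orbifold $W\cup_{H^3}(I^*\backslash D^4)$. The round $S^{n-4}$ factor only handles its own directions, and no averaged positivity suffices. Invariant pieces glued along product collars have $\Ric(\partial_r,\partial_r)=0$ in the collar for every $t$, so that cannot work.

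The missing idea is a specific Fano base. The paper takes the degree-one del Pezzo surface $Q_+^4$ with a single $E_8$ singularity, for which $c_1^{\mathrm{orb}}=u=c_1(-K)$ is positive. The orbifold Calabi--Yau theorem then gives a K\"ahler orbifold metric with positive Ricci form. This also pins down $W=W_+^4$, the complement of the negative $E_8$ plumbing in $\mathbb{CP}^2\#8\,\overline{\mathbb{CP}}^{\,2}$, which is simply connected with intersection form $\langle1\rangle$. Your ``plumbing/$E_8$-type piece'' leaves $W$ unspecified, and capping the $E_8$ plumbing itself would give the wrong intersection form.

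The homeomorphism type is also not established. $W\cup_{H^3}C(H^3)$ is neither the Chern manifold nor any manifold: the link of the cone point has fundamental group $I^*$, and local Euclideanity appears only after a product with $S^k$, $k\geq1$. You must instead:
\begin{enumerate}
\item cap $W$ with a Freedman contractible manifold $\Delta$, $\partial\Delta=H^3$;
\item check that $W\cup\Delta$ is simply connected with form $\langle1\rangle$ and nonzero $\operatorname{ks}$, so that Freedman's classification identifies it with $M_{\mathrm{Ch}}^4$;
\item compare the two products.
\end{enumerate}
The paper collapses the core of $\Delta$ to obtain a cell-like map $(W\cup\Delta)\times S^k\to X^n$ and applies Siebenmann's approximation theorem. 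It computes $\operatorname{ks}(X^n)\ne0$ independently by the link formula. Your product-naturality argument for $\operatorname{ks}$ is valid only once this homeomorphism is known.

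Two smaller corrections. First, $\pi_1(M)=1$ requires $\pi_1(W)=1$, not merely a simply connected quotient, since $M\to M/G$ is not a fibration at the exceptional orbit. For example, over the suspension orbifold $I^*\backslash S^4$ the total space $I^*\backslash(S^4\times G)$ has fundamental group $I^*$, although the underlying space is simply connected. Second, $n\geq6$ is not needed for the double suspension, which already works for $k=1$. It is needed for $k=n-4\geq2$, which makes $X^n$ simply connected and gives the round $S^k$ factor positive Ricci curvature.
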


The lower dimension bound is sharp among simply connected closed topological manifolds of dimension at least five. Indeed, for such a manifold $M^5$, Poincar\'e duality gives $H^4(M^5;\mathbb Z/2)\cong H_1(M^5;\mathbb Z/2)=0$, so $\operatorname{ks}(M^5)=0$ and $M^5$ admits a PL structure.

A sphere-bundle variant gives simply connected spin examples on both sides of the collapse; see \autoref{prop:spin-examples}. Recall that a topological manifold is called spin if it is orientable and the second Stiefel–Whitney class of its stable topological tangent bundle vanishes.

For a positive integer $r$, let $L^5(r;1,1,1)$ denote the quotient of the unit sphere $S^5\subset\mathbb C^3$ by the scalar action generated by multiplication by $e^{2\pi i/r}$. A circle-bundle variant gives $5$-dimensional limits with finite cyclic fundamental groups.

\begin{theorem}\label{thm:lens-family}
	There are a closed $2$-connected smooth manifold $M_0^8$ with a free circle action and a constant $c>0$ such that, for every integer $m\geq1$, the quotient $M_m^8=(\mathbb Z/(2m))\backslash M_0^8$ admits metrics $g_{m,j}$ such that, for a compact Alexandrov space $(X_m^5,d_m)$,
	\begin{equation}\label{eq:uniform-bounds}
		\sec(g_{m,j})\geq-1,\qquad \Ric(g_{m, j})\geq c \,g_{m, j},
	\end{equation}
	and
	\begin{equation*}
		(M_m^8, g_{m,j})\xrightarrow[j\to\infty]{\mathrm{GH}}(X_m^5,d_m).
	\end{equation*}
	The underlying space $X_m^5$ is a closed orientable topological manifold, and
	\begin{equation}\label{eq:lens-topology}
		X_m^5\simeq L^5(2m;1,1,1),\qquad \operatorname{ks}(X_m^5)\ne0.
	\end{equation}
	Moreover,
	\begin{equation*}
		\pi_1(M_m^8)\cong\pi_1(X_m^5)\cong\mathbb Z/(2m),
	\end{equation*}
	and $M_0^8$ is the universal covering manifold of every $M_m^8$. Consequently the spaces $X_m^5$ admit no PL structures and are pairwise nonhomeomorphic. The constant $c$ is independent of both $m$ and $j$.
\end{theorem}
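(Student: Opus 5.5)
We first build the limit and then realize it as a quotient of a smooth manifold by a free isometric torus or sphere action. Fix a closed orientable topological $5$-manifold $X_m^5$ homotopy equivalent to $L^5(2m;1,1,1)$ with $\operatorname{ks}(X_m^5)\ne0$. Surgery theory supplies it: for odd-order-containing cyclic groups with $2\mid r$, the structure set of $L^5(r;1,1,1)$ contains a fake lens space with nonzero $\operatorname{ks}$. Alternatively, one takes the quotient of a fake $S^5$-type free $\mathbb Z/(2m)$ action whose normal invariant in $[L,\mathrm{Top}/\mathrm{PL}]=H^4(L;\mathbb Z/2)=\mathbb Z/2$ is nonzero; this class is nonzero precisely because $2\mid 2m$. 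The universal cover $\widetilde X^5$ is then a simply connected homotopy $5$-sphere, hence $S^5$, and $\operatorname{ks}$ dies on it.

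The key structural step is to exhibit $X_m^5$ as an Alexandrov space obtained as a quotient $M_m^8/T$ by an isometric action of a compact $3$-dimensional Lie group $G$, which we take to be $SU(2)$ or $S^3$. We then collapse by shrinking the fibers (Cheeger deformation / Berger-type scaling), which keeps $\sec\ge -1$ uniformly. To get an action with manifold quotient, we use the fact that $\mathbb Z/(2m)$ acts freely and uniformly on one fixed object. Concretely, we take a $2$-connected $M_0^8$ carrying a free $S^3$ action (equivalently, a principal $S^3$ bundle) over $S^5$ with a non-smoothable twist. Better, we take an $S^3$-fibration over a topological $S^5$ whose total space is smooth, with a circle subgroup $S^1\subset S^3$ acting freely and commuting with a free $\mathbb Z/(2m)\subset S^1$ action on the base compatible with the fake lens structure. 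The quotient $M_m^8=\mathbb Z/(2m)\backslash M_0^8$ then fibers over $X_m^5$ with fiber $S^3$. Its universal cover is $M_0^8$, $2$-connected from the long exact sequence, and this gives $\pi_1\cong\mathbb Z/(2m)$ on both sides.

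For the metrics, we start from a fixed $\mathbb Z/(2m)$-invariant, $S^3$-invariant metric on $M_0^8$ and apply the Cheeger deformation $g_t$ along the $S^3$ orbits. Standard O'Neill computations bound $\sec(g_t)$ below uniformly. The positive Ricci bound comes from the fibers: the round $S^3$ fibers rescaled by $t\to0$ have Ricci curvature $\sim 2/t^2$. The horizontal Ricci curvature gets $A$-tensor contributions $|A|^2 t^2\ge0$ plus the base Ricci, so we need the quotient metric to have Ricci bounded below positively. For that we choose the base metric (an Alexandrov metric on $X_m^5$) with curvature $\ge 1$ in a suitable sense, inherited from the round $S^5$ through $\mathbb Z/(2m)$. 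Since every $M_m^8$ is a quotient of the same $(M_0^8,g_t)$ by isometries, the constants $c$ and the lower bound $-1$ are independent of $m$ and $j$ after a fixed rescaling. Gromov--Hausdorff convergence to $(X_m^5,d_m)$ follows because the orbit diameters go to zero.

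The main obstacle is the step realizing a nonsmoothable $X_m^5$ as the orbit space of a free isometric action on a \emph{smooth} manifold with a uniform Alexandrov metric of positive curvature type. A free smooth action of a compact group has a smooth quotient, so the action must in fact have singular orbits. The quotient is then a topological manifold only through a hyperbolization- or orbifold-type mechanism, as the keywords suggest. I would first try to represent $X_m^5$ as the quotient of $S^5\times S^3$, or of a smooth $8$-manifold, by a nonfree $S^3$ action whose isotropy is finite. Such a quotient is an orbifold that is topologically a manifold (links are homology spheres with trivial local group effects) but carries the $\operatorname{ks}$ obstruction. I would then check that the orbifold local groups act on $S^3$ fibers so that Ricci stays positive.

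Once this is in place, the remaining conclusions follow from earlier facts. Pairwise nonhomeomorphism follows from the distinct $\pi_1$. The absence of PL structures follows from $\operatorname{ks}\ne0$, using the fact from \cite[Section 3.8]{Quinn} that in dimension $\ge5$ the vanishing of $\operatorname{ks}$ is equivalent to admitting a PL structure.
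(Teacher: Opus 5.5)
\textbf{Verdict: genuine gap.} The decisive step is left as ``I would first try'': producing a smooth $8$-manifold with a locally free $SU(2)$-action whose orbit space is a non-PL fake lens space. The concrete schemes you do write down cannot work.

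Starting from an abstract fake lens space supplied by surgery gives no orbifold structure on it, let alone a smooth total space over it. Your $S^3$-bundle over $S^5$ with a commuting free $\mathbb Z/(2m)$-action defeats itself. If $S^3$ acts freely and smoothly on $M_0^8$, then $M_0^8/S^3$ is a smooth manifold with a smooth free $\mathbb Z/(2m)$-action, so $X_m^5$ is smooth. You notice this, but never carry out the replacement (a nonfree action with finite isotropy).

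\textbf{Missing idea: a local model that does the job, and a global orbifold realizing it.} Take the isotropy to be the binary icosahedral group $I^*\subset SU(2)$ acting on $D^4$.
\begin{itemize}
\item $H^3=I^*\backslash S^3$ is an integral homology sphere, so double suspension gives $C^\circ(H^3)\times\mathbb R\cong\mathbb R^5$ and the quotient is a topological manifold.
\item The Cohen--Saito--Sullivan link formula with $\mu(H^3)=1$ gives $\operatorname{ks}\cap[X]=[\gamma]$ for the singular circle $\gamma$ (\autoref{lem:link-obstruction}).
\item The smooth total space exists because $I^*$ acts freely and diagonally on $D^4\times SU(2)$. The boundary is trivialized by $[x,a]\mapsto([x],x^{-1}a)$, so it glues to $W^4\times SU(2)$ (\autoref{lem:smooth-total-space}).
\item Globally, the paper takes the unit circle bundle of $L^{\otimes 2m}$ over the degree-one del Pezzo surface $Q_+^4$ with one $E_8$ point, which is $\simeq\mathbb{CP}^2$. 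Its total space is $\simeq L^5(2m;1,1,1)$.
\item $\operatorname{ks}\neq0$ because the singular fibre generates $\pi_1\cong\mathbb Z/(2m)$, and it is nonzero mod $2$ exactly because the order is even.
\end{itemize}
None of these ingredients, and no candidate orbifold, appears in your proposal. Incidentally, $[L,\mathrm{Top}/\mathrm{PL}]=H^3(L;\mathbb Z/2)$, not $H^4$.

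\textbf{The positive Ricci step fails as written.} A base metric ``inherited from the round $S^5$ through $\mathbb Z/(2m)$'' would make $\mathbb Z/(2m)$ act isometrically on the round sphere. Then $X_m^5$ would be a spherical space form with $\operatorname{ks}=0$. Your topological picture is close, though. The paper's $\widehat X_1^5$ is a homotopy $5$-sphere, hence homeomorphic to $S^5$, and $X_m^5=(\mathbb Z/(2m))\backslash\widehat X_1^5$. But it carries a nonstandard orbifold structure with a singular circle invariant under the free circle action. These free $\mathbb Z/(2m)$-actions are not conjugate to smooth ones, since their quotients have $\operatorname{ks}\neq0$.

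The O'Neill/Cheeger argument also needs a smooth orbifold metric with $\Ric\geq\lambda>0$ on the quotient orbifold, not an Alexandrov metric of curvature at least $1$ ``in a suitable sense.'' The paper gets it from the orbifold Calabi--Yau theorem on $\mathcal Q_+^4$, where $c_1^{\mathrm{orb}}=u$ is positive. It then uses a connection metric on the circle bundle with fibre radius $a^2\leq\lambda$. That smallness is needed because the horizontal $A$-tensor term enters with a negative sign, not the nonnegative one you claim. For the $SU(2)$-collapse itself this term is $O(t^2)$ and harmless.

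\textbf{Uniformity and connectivity.} Your idea that every $M_m^8$ is an isometric quotient of one $(M_0^8,g_t)$ matches the paper. It does require a free circle action on $M_0^8$ commuting with $SU(2)$, whose finite subgroups descend to the quotients $\widehat X_{2m}^5$. The $2$-connectedness requires an actual fibration. The paper pulls $\widehat X_1^5\to Q_+^4$ back to $M_+^7$ and uses that the Euler class $\pi^*u$ evaluates to one on a generator of $\pi_2(M_+^7)\cong\mathbb Z$.
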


For $m=1$, the limit is homotopy equivalent to $\mathbb{RP}^5$ but has no smooth structure. Such topological manifolds already occur in the classification of $5$-manifolds with fundamental group of order two \cite[discussion preceding Theorem 3.11]{HS}. \autoref{thm:lens-family} realizes all the even-order lens-space homotopy types in \eqref{eq:lens-topology} by Alexandrov metrics and smooth collapsing sequences satisfying the common bounds \eqref{eq:uniform-bounds}. The sources share one fixed $2$-connected universal covering manifold, and the bounds are uniform over the entire family.

Finally, we construct aspherical limits. Davis and Januszkiewicz produced closed aspherical manifolds whose homotopy types cannot be represented by PL manifolds \cite[Theorem 5a.1 and Corollary 5a.4]{DJ}. We realize their torus-product examples as Alexandrov spaces and as smooth collapsing limits with collapse codimension $3$. The scalar curvatures of the smooth sources tend uniformly to $+\infty$. An application of fundamental group structure theorems also obstructs approximation of these limits with almost nonnegative Ricci curvature in any fixed dimension.

\begin{theorem}\label{thm:aspherical}
	For every $n\geq5$, there are a closed smooth manifold $M^{n+3}$, a constant $D_n>0$, and metrics $h_j$ on $M^{n+3}$ satisfying
	\begin{equation*}
		\begin{gathered}
		\sec(h_j)\geq-1,\qquad \operatorname{diam}(M^{n+3},h_j)\leq D_n,\\
		\inf_{M^{n+3}}\sca(h_j)\longrightarrow+\infty,
		\end{gathered}
	\end{equation*}
	such that $(M^{n+3}, h_j)$ converges in the Gromov--Hausdorff sense to a compact Alexandrov space $X^n$. Its underlying space is a closed orientable aspherical topological manifold with $\operatorname{ks}(X^n)\ne0$, and is not homotopy equivalent to any closed PL $n$-manifold. The same underlying space also admits a locally $\operatorname{CAT}(0)$ metric. These spaces cannot be Gromov--Hausdorff limits of closed Riemannian manifolds of any fixed dimension $N\geq2$ with Ricci lower bounds tending to zero from below.
\end{theorem}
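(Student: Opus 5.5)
The plan is to mimic the construction behind the earlier theorems, replacing the simply connected four-dimensional building block by a Davis--Januszkiewicz aspherical manifold. By \cite[Theorem 5a.1, Corollary 5a.4]{DJ}, obtained via hyperbolization of a triangulation of the Chern manifold $M_{\mathrm{Ch}}^4$, there is a closed orientable aspherical topological $4$-manifold $N^4$ with $\operatorname{ks}(N^4)\ne0$. Its product $X^n=N^4\times T^{n-4}$, for $n\geq5$, is not homotopy equivalent to any closed PL manifold. Hyperbolization gives $N^4$ a locally $\operatorname{CAT}(0)$ (indeed $\operatorname{CAT}(-1)$-cell) piecewise Euclidean or hyperbolic polyhedral metric; the product with a flat torus then gives the locally $\operatorname{CAT}(0)$ metric on $X^n$. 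For the Alexandrov structure, I would instead realize $X^n$ as a quotient: find a closed smooth $M^{n+3}$ carrying a free-on-a-dense-set isometric action of a compact three-dimensional group $G$ (here $G=SU(2)$ or $SO(3)$, giving codimension $3$) with $M/G\cong X^n$. Then $M/G$ with the quotient metric is Alexandrov with $\sec\geq$ the lower bound of $M$.

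To build $M^{n+3}$, I would use the fact that the hyperbolized space is a cubical/polyhedral complex whose links are flag. Following the orbifold approach suggested by the keywords, I would write $X^n$ (or $N^4$) as the orbit space of an $SU(2)$-action on a smooth manifold. Concretely, each cell is thickened to a smooth $G$-manifold with boundary over that cell (a Davis-type reflection construction, with singular strata modeled on $\mathbb{R}^k\times C(S^2)$ quotients of $\mathbb{R}^{k+3}$). Gluing these gives $M^{n+3}$ as a smooth $G$-manifold whose orbit space is homeomorphic to $X^n$. Smoothness of $M$ is consistent with nonsmoothability of $X$, because the quotient map destroys the smooth structure near singular orbits.

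For the metrics, I would start from a fixed $G$-invariant metric $h$ on $M$ with $\sec(h)\geq-1$ and rescale: $h_j$ equals $h$ on the horizontal distribution and $j^{-2}h$ on orbits. By O'Neill/Cheeger deformation estimates, sectional curvatures stay bounded below, and the GH limit is $(M/G,d_h)$ with diameter bounded by $D_n:=\operatorname{diam}+1$. Scalar curvature gains $j^{2}\,\sca(G\text{-orbit})$, and orbits are $S^3$ or $\mathbb{RP}^3$ or $S^2$-like, all with positive scalar curvature, so $\inf\sca(h_j)\to+\infty$. This needs that orbits of every type have positive scalar curvature, which forces no torus orbits, so the isotropy must be controlled. \textbf{The main obstacle} is producing $h$ with $\sec\geq-1$ near the singular strata while keeping the quotient homeomorphic to $X$. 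I would first try to take $X$ itself with an Alexandrov polyhedral metric, realize it as a quotient of a piecewise smooth space, and smooth by an equivariant gluing controlled by the earlier sections' local models.

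Finally, for the obstruction, suppose $(M_i^N,g_i)\to X^n$ with $\Ric\geq-\epsilon_i$, $\epsilon_i\to0$, and bounded diameter. By Kapovitch--Wilking, $\pi_1(M_i)$ has a nilpotent subgroup of index $\leq C(N)$. For large $i$, a surjection (or a map with small-kernel image) from $\pi_1(M_i)$ onto $\pi_1(X)$ follows from the GH-closeness and the semilocal simple connectivity of $X$ (Sormani--Wei / Tuschmann-type results). So $\pi_1(X)=\pi_1(N^4)\times\mathbb{Z}^{n-4}$ would be virtually nilpotent. But $\pi_1(N^4)$ contains nonabelian free subgroups (it is the fundamental group of a nonpositively curved cubical complex with negative curvature features from hyperbolization, and it is not virtually abelian since it is a $4$-dimensional aspherical group that is not virtually $\mathbb{Z}^4$, as $N^4$ is not a torus up to finite cover, having $\operatorname{ks}\ne0$). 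This contradicts virtual nilpotence, completing the argument.
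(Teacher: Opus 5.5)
Your proposal has a genuine gap at the central step. You never construct the smooth $G$-manifold $M^{n+3}$ with orbit space $X^n$, and the scheme you sketch cannot work as stated, for two reasons.

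\textbf{No cells on $N^4$.} There are no cells to thicken on $N^4$: a closed $4$-manifold with $\operatorname{ks}\neq0$ admits no triangulation. This is also why Davis--Januszkiewicz hyperbolize the capped $E_8$ plumbing rather than the Chern manifold.

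\textbf{Wrong local models.} The local models you indicate, built from $C(S^2)=\mathbb R^3$, have normal links that are quotients of $S^2$. They can never produce the codimension-four stratum with Poincar\'e-sphere normal link that \autoref{lem:link-obstruction} needs for $\operatorname{ks}\neq0$.

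\textbf{The missing idea.} Work with the polyhedral homology manifold $Q_h^4$, which is PL away from a single vertex with link $H^3=I^*\backslash S^3$.
\begin{enumerate}
\item Smoothing its PL complement and gluing in $I^*\backslash D^4$ makes $Q_h^4$ a smooth orbifold (\autoref{lem:hyperbolized-orbifold}).
\item The diagonal $I^*$-action on $D^4\times SU(2)$, glued to $W_h^4\times SU(2)$ by $[x,a]\mapsto([x],x^{-1}a)$, gives a closed smooth $M_h^7$ with a \emph{locally free} $SU(2)$-action and quotient $Q_h^4$ (\autoref{lem:smooth-total-space}).
\item Set $M^{n+3}=M_h^7\times T^{n-4}$. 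The double suspension theorem makes $Q_h^4\times T^{n-4}$ a manifold, homotopy equivalent (indeed homeomorphic, by cell-like approximation) to your $N^4\times T^{n-4}$.
\end{enumerate}

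\textbf{The metrics.} Local freeness is what makes $\sca(g_t)=6t^{-2}+O(1)$ uniform for $g_t=\pi^*q+t^2g_G(A,A)$. The obstacle you flag, $\sec\geq-1$ near singular strata, is not one: any smooth metric on a closed manifold has $\sec$ bounded below, and Cheeger deformation preserves such a bound. However, shrinking the given orbit metric by $j^{-2}$ is not a Cheeger deformation. It can drive $\sec$ to $-\infty$ unless the fibre metric is bi-invariant.

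\textbf{The $\operatorname{CAT}(0)$ metric.} The locally $\operatorname{CAT}(0)$ metric from hyperbolization lives on $Q_h^4$, not on $N^4$. You need the identification of the underlying space with $Q_h^4\times T^{n-4}$ before taking the product with a flat torus.

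\textbf{The Ricci obstruction.} Your use of Kapovitch--Wilking and Sormani--Wei matches \autoref{prop:aspherical-ricci-obstruction}, but the final step does not close.
\begin{itemize}
\item You must exclude virtual nilpotence, not merely virtual abelianness.
\item $\operatorname{ks}(N^4)\neq0$ does not rule out a finite cover homotopy equivalent to $T^4$. The Kirby--Siebenmann number of a degree-$d$ cover is $d$ times that of the base mod $2$, so it vanishes for even $d$.
\item The free-subgroup claim is unsupported.
\end{itemize}
The paper closes this directly. A finitely generated, torsion-free, virtually nilpotent $\pi_1(X^n)$ is, by Lee--Raymond, the fundamental group of a closed infranilmanifold. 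That infranilmanifold is a smooth manifold homotopy equivalent to the aspherical $X^n$, contradicting Davis--Januszkiewicz. Alternatively, the solvable subgroup theorem for the $\operatorname{CAT}(0)$ group $\pi_1(Q_h^4\times T^{n-4})$ reduces you to the virtually abelian case, and Bieberbach's theorem then gives a flat manifold, with the same contradiction.
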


Galaz-Garc\'ia and Zarei established the existence of Alexandrov metrics on closed topological manifolds of cohomogeneity one \cite[Corollary B]{GalazGarciaZarei2018}. Michael Kapovich constructed compact three-dimensional aspherical Alexandrov spaces with isolated cone singularities over $\mathbb{RP}^2$ \cite[Theorem 1]{KapovichAspherical2025}. Alattar and Tadman characterized approximation of resolvable ANR homology manifolds by PL manifolds of the same dimension with a uniform contractibility function, using an extension of the Kirby--Siebenmann class \cite[Corollary 4]{AlattarTadman2026}. The constructions here realize the specified non-PL topological manifolds by Alexandrov metrics and smooth collapse of codimension three, with the additional curvature bounds stated above.

The common local ingredient is the quotient $I^*\backslash\mathbb R^4$, where $I^*\subset SU(2)$ is the binary icosahedral group. Its link is the Poincar\'e homology sphere $H^3=I^*\backslash S^3$, whose Rokhlin invariant is nonzero. The double suspension theorem makes the product of $I^*\backslash\mathbb R^4$ with a positive-dimensional manifold locally Euclidean. The link formula then identifies the Kirby--Siebenmann class with the dual of the codimension-four singular stratum. A diagonal $I^*$-action on $D^4\times SU(2)$ gives a smooth local total space. An explicit boundary trivialization glues it to a trivial bundle over the regular part, producing a smooth total space of dimension three greater than the quotient.

For the positive Ricci construction, we use a classical degree-one del Pezzo surface $Q_+^4$ with a single $E_8$ singularity. Its minimal resolution is $M_{\mathrm{res}}^4=\mathbb{CP}^2\#8\,\overline{\mathbb{CP}}^{\,2}$, and contracting the exceptional $E_8$ configuration amounts to capping the complement of the negative $E_8$ plumbing by $C(H^3)$. The surface $Q_+^4$ is homotopy equivalent to $\mathbb{CP}^2$; see \autoref{lem:fano-topology}. The orbifold Calabi--Yau theorem \cite[Theorem 1.2]{Faulk} supplies a K\"ahler metric on $Q_+^4$ with prescribed positive Ricci form. The smooth total-space construction in \autoref{lem:smooth-total-space} gives a closed simply connected smooth manifold $M_+^7$ with a locally free $SU(2)$-action and orbifold quotient $Q_+^4$. Shrinking its $SU(2)$-orbits and taking products with $S^{n-4}$ proves \autoref{thm:simply-connected}. Circle bundles with even Euler class give the five-dimensional manifolds in \autoref{thm:lens-family}. We then collapse the $SU(2)$ fibres of their smooth total spaces. The orbit-shrinking construction and positive Ricci lifting are standard tools; see \cite[Remark 3.3 and Theorem A]{SearleWilhelm2015}. We record the connection-metric estimates that give both curvature bounds along the same collapsing family. Finite circle quotients of one fixed construction make those estimates uniform in $m$. Twisting the sphere factor by the Cartier line bundle on $Q_+^4$ gives the spin examples in \autoref{prop:spin-examples}.

For \autoref{thm:aspherical}, we use the hyperbolized capped $E_8$ plumbing of Davis--Januszkiewicz \cite[Lemma 5a.3]{DJ}. We remove a cone neighborhood, smooth its PL complement, and attach the standard orbifold cone. This gives a smooth orbifold to which the $SU(2)$ construction applies. This realizes the Davis--Januszkiewicz torus-product homotopy types by Alexandrov metrics and smooth collapse with the scalar-curvature growth in \autoref{thm:aspherical}.

The proof is organized as follows. \autoref{sec:construction} establishes the smooth total-space construction, the curvature estimates, and the link formula used to detect the obstruction. \autoref{sec:positive} proves \autoref{thm:simply-connected} and \autoref{thm:lens-family}, and constructs the spin variant. \autoref{sec:aspherical} proves \autoref{thm:aspherical} and the Ricci obstruction.

\section{Smooth total spaces and collapsing metrics}\label{sec:construction}

All orbifolds considered below are effective and have no boundary unless otherwise stated. We distinguish an orbifold $\mathcal X^d$ from its underlying space $X^d$ when this distinction is needed. A smooth metric on a compact orbifold has a finite sectional curvature lower bound in its uniformizing charts, and the induced length metric on $X^d$ is an Alexandrov metric with the same lower bound. We normalize the bi-invariant metric $g_G$ on $G=SU(2)$ to have sectional curvature $1$, so that $\operatorname{diam}(G,g_G)=\pi$ and $\Ric(g_G)=2g_G$. The $I^*$-actions and scalar circle actions are on the left; the $G$-actions on smooth total spaces are on the right.

We first construct smooth seven-dimensional total spaces over orbifolds with a single Poincar\'e-link singularity.

\begin{lemma}\label{lem:smooth-total-space}
	Let $\mathcal X^4$ be a closed smooth four-orbifold with exactly one singular point, whose local model is $I^*\backslash D^4$ with the standard action $I^*\subset SU(2)$ on $D^4\subset \mathbb C^2$. Then there is a closed smooth manifold $M^7$ with a locally free smooth $G$-action and orbifold quotient $M^7/G=\mathcal X^4$. If the complement $W^4$ of an open singular ball is simply connected, then $M^7$ is simply connected.
\end{lemma}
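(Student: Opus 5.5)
The plan is to build $M^7$ by gluing two pieces along a common boundary $H^3\times G$, one over the regular part and one over the singular ball. Write $\mathcal X^4=W^4\cup_{\partial}(I^*\backslash D^4)$, where $W^4$ is a smooth compact four-manifold with $\partial W^4=H^3=I^*\backslash S^3$. Over $W^4$ take the trivial principal bundle $W^4\times G$, with $G$ acting on the right on the second factor. Over the singular ball take the local total space
\begin{equation*}
	E=I^*\backslash(D^4\times G),\qquad \gamma\cdot(x,h)=(\gamma x,\gamma h),
\end{equation*}
where $I^*\subset SU(2)=G$ acts on $G$ by left multiplication and $G$ acts on $E$ by right multiplication on the second factor.

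\textbf{Step 1: the local piece.} The diagonal $I^*$-action on $D^4\times G$ is free, since it is already free on the $G$ factor. Hence $E$ is a smooth compact $7$-manifold with boundary $I^*\backslash(S^3\times G)$. The right $G$-action commutes with the left $I^*$-action. Its stabilizer at $[x,h]$ is $h^{-1}\operatorname{Stab}_{I^*}(x)h$, which is finite, so the action is locally free. The quotient is $E/G=I^*\backslash D^4$ as an orbifold: the slice $D^4\times\{1\}$ with its residual $I^*$-action is exactly the orbifold chart.

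\textbf{Step 2: trivializing the boundary.} The map
\begin{equation*}
	\Phi\colon I^*\backslash(S^3\times G)\to (I^*\backslash S^3)\times G,\qquad [x,h]\mapsto([x],\,x^{-1}h)
\end{equation*}
identifies $x\in S^3$ with an element of $SU(2)$. It is well defined, because $(\gamma x)^{-1}\gamma h=x^{-1}h$. It is $G$-equivariant for right multiplication, and its inverse is $([x],k)\mapsto[x,xk]$. So $\Phi$ is a $G$-equivariant diffeomorphism onto $H^3\times G=\partial W^4\times G$ covering the identity of $H^3$.

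\textbf{Step 3: gluing.} Set $M^7=(W^4\times G)\cup_\Phi E$. Using collars on both sides, this is a closed smooth manifold with a smooth locally free $G$-action, because $\Phi$ is equivariant. The orbifold quotient is $W^4\cup_{H^3}(I^*\backslash D^4)=\mathcal X^4$. The gluing of orbifold structures is compatible because on the collar the action is free and the quotient map is a submersion onto the collar of $\partial W^4$.

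\textbf{Step 4: simple connectivity.} This is the main point to check. By Seifert--van Kampen, $\pi_1(M^7)$ is generated by the images of $\pi_1(W^4\times G)\cong\pi_1(W^4)=1$ and $\pi_1(E)$, amalgamated over $\pi_1(H^3\times G)\cong I^*$. The piece $E$ deformation retracts to $I^*\backslash(\{0\}\times G)=I^*\backslash G\cong H^3$, so $\pi_1(E)\cong I^*$. The composite $\pi_1(\partial E)\to\pi_1(E)$ is induced by $[x,h]\mapsto[0,h]$, and it is surjective. Indeed, restricted to the fibre $\{[x_0]\}\times G$ via $\Phi^{-1}$, the inclusion is $k\mapsto[x_0,x_0k]$, which hits all of $I^*\backslash G$ up to homotopy through the map $I^*\backslash(S^3\times G)\to I^*\backslash G$. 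Its fibre $S^3$ is simply connected, so by the long exact sequence the map is a $\pi_1$-isomorphism. Hence $\pi_1(M^7)\cong\pi_1(E)/\langle\!\langle \pi_1(H^3\times G)\rangle\!\rangle\ast_{}\pi_1(W^4)=1$.

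More directly, $\pi_1(M^7)$ is the pushout of $1\leftarrow I^*\xrightarrow{\ \cong\ } I^*$, which is trivial. The only care needed is that the map $\pi_1(H^3\times G)\to\pi_1(W^4\times G)$ is into the trivial group, which holds by hypothesis.
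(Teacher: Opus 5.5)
Your proposal is correct and follows the paper's proof essentially verbatim: the same diagonal local model $I^*\backslash(D^4\times G)$, the same trivialization $[x,h]\mapsto([x],x^{-1}h)$, the same gluing with $W^4\times G$, and van Kampen using that $\partial E\to E$ is a $\pi_1$-isomorphism. You justify that isomorphism via the $S^3$-bundle $I^*\backslash(S^3\times G)\to I^*\backslash G$, while the paper checks it upstairs on the $I^*$-covers; both work, but drop your aside about the fibre $\{[x_0]\}\times G$, since that fibre is simply connected and contributes nothing to $\pi_1$.
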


\begin{proof}
	Identify $\partial W^4$ with $H^3=I^*\backslash S^3$. Let
	\begin{equation*}
		M_{\mathrm{loc}}^7=I^*\backslash(D^4\times G),\qquad
		\gamma\cdot(x,a)=(\gamma x,\gamma a).
	\end{equation*}
	The diagonal finite-group action is free, including over $x=0$, because left multiplication on $G$ is free. Thus $M_{\mathrm{loc}}^7$ is a compact smooth manifold with boundary. Right multiplication on the second factor descends to a locally free $G$-action, with quotient $I^*\backslash D^4$.

	The boundary bundle has the explicit $G$-equivariant trivialization
	\begin{equation}\label{eq:boundary-trivialization}
		\Phi:I^*\backslash(S^3\times G)\longrightarrow H^3\times G,
		\qquad [x,a]\longmapsto([x],x^{-1}a).
	\end{equation}
	It is well defined because $(\gamma x)^{-1}(\gamma a)=x^{-1}a$. Its inverse sends $([x],a)$ to $[x,xa]$, independently of the chosen lift $x$. Using the boundary identification and smooth collars, form
	\begin{equation}\label{eq:smooth-total-space}
		M^7=(W^4\times G)\cup_{\Phi}M_{\mathrm{loc}}^7.
	\end{equation}
	The two actions glue, and their quotient is the original orbifold $\mathcal X^4$.

	For the fundamental group assertion, $M_{\mathrm{loc}}^7$ deformation retracts to $I^*\backslash G$, and $\partial M_{\mathrm{loc}}^7\to M_{\mathrm{loc}}^7$ induces an isomorphism on fundamental groups. This can be checked upstairs from the simply connected spaces $S^3\times G$ and $D^4\times G$, whose covering group is $I^*$. If $W^4$ is simply connected, then so is $W^4\times G$, and van Kampen's theorem applied to \eqref{eq:smooth-total-space} gives $\pi_1(M^7)=0$.
\end{proof}

The following proposition collects standard connection-metric estimates for the seven-dimensional construction and its circle- and sphere-bundle pullbacks. Orbit shrinking with a common sectional curvature lower bound and positive Ricci lifting are described in \cite[Remark 3.3 and Theorem A]{SearleWilhelm2015}. We include the estimates needed to control both curvature bounds along a single collapsing family.

\begin{proposition}\label{prop:collapse-metrics}
	Let $M^{d+3}$ be a closed smooth manifold with a locally free $G$-action, let $\pi:M^{d+3}\to\mathcal X^d=M^{d+3}/G$ be the quotient map, and let $q$ be a smooth orbifold metric on $\mathcal X^d$. There are $G$-invariant metrics $g_t$ on $M^{d+3}$, $0<t\leq t_0$, and constants $K,D<\infty$ such that
	\begin{equation}\label{eq:collapse-estimates}
		\begin{gathered}
		\sec(g_t)\geq-K,\qquad \operatorname{diam}(M^{d+3},g_t)\leq D,\\
		d_{\mathrm{GH}}((M^{d+3},g_t),(X^d,q))\leq\frac{\pi t}{2}.
		\end{gathered}
	\end{equation}
	The scalar curvatures satisfy $\inf_{M^{d+3}}\sca(g_t)\to+\infty$ as $t\to0$. If $q$ has positive Ricci curvature, $t_0$ can be chosen so that $\Ric(g_t)\geq c_0g_t$ for a constant $c_0>0$. If a compact group acts smoothly on $M^{d+3}$, commutes with $G$, and induces isometries of $(\mathcal X^d,q)$, then all the metrics can be chosen invariant under that group as well.
\end{proposition}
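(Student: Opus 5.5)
The plan is to use connection metrics, i.e.\ orbit shrinking in the sense of Cheeger. First I fix a principal-type connection on the locally free $G$-action. Averaging any metric over $G$ gives a $G$-invariant metric $\bar g$ on $M^{d+3}$. Let $\mathcal H$ be the $\bar g$-orthogonal complement of the orbit directions. This distribution is $G$-invariant. Because the action is locally free, the infinitesimal action $\mathfrak g\to T_pM$ is injective everywhere, so the vertical bundle $\mathcal V$ is trivialized by the action fields $X^*$ for $X\in\mathfrak g$. I then define
\begin{equation*}
	g_t=\pi^*q\ \text{on }\mathcal H,\qquad g_t(X^*,Y^*)=t^2g_G(X,Y),\qquad \mathcal H\perp_{g_t}\mathcal V .
\end{equation*}
For this to be $G$-invariant with $G$ acting on the right, the vertical part has to be $\mathrm{Ad}$-twisted; using the bi-invariant $g_G$ makes the twist harmless. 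Locally, in a uniformizing chart $\tilde U\times G$ with finite group $\Gamma$, $g_t$ is the Kaluza--Klein metric of a connection form $\omega$ with fibre metric $t^2g_G$. Hence $\pi:(M,g_t)\to(\mathcal X,q)$ is an orbifold Riemannian submersion with totally geodesic fibres isometric to quotients of $(G,t^2g_G)$. If a compact group $H$ commutes with $G$ and acts isometrically on $q$, I first average $\bar g$ over $G\times H$. Then $\mathcal H$, $\omega$ and $g_t$ are all $H$-invariant.

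The metric estimates come next. Fibres have diameter at most $\pi t$, and horizontal lifts of $q$-geodesics have the same length as the curves they lift. So $\pi$ is a $\pi t$-Gromov--Hausdorff approximation. The standard submersion argument then gives $d_{\mathrm{GH}}\leq \pi t/2$: take the correspondence given by the graph of $\pi$ and note $|d_{g_t}(x,y)-q(\pi x,\pi y)|\leq \pi t$. The diameter bound $D=\operatorname{diam}(q)+\pi t_0$ follows immediately.

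For curvature I use O'Neill's formulas for totally geodesic fibres, written in the canonical variation scaling. Let $A$ be the O'Neill tensor, i.e.\ the curvature $\Omega$ of $\omega$. It scales as $|A|^2_{g_t}=t^2|\Omega|^2$ with $\Omega$ measured in $g_1$. The key formulas are:
\begin{gather*}
	\sec_{g_t}(X,Y)=\sec_q(X,Y)-\tfrac34 t^2|\Omega(X,Y)|^2,\qquad \sec_{g_t}(U,V)=t^{-2}\sec_{g_G}(U,V)=t^{-2},\\
	\Ric_{g_t}(U,U)=2t^{-2}+t^2|A^*U|^2,\qquad \Ric_{g_t}(X,X)=\Ric_q(X,X)-2t^2|A_X|^2 ,
\end{gather*}
where $X,Y$ are horizontal and $U,V$ vertical unit vectors. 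The mixed terms are $O(t)$ with respect to unit vectors, and $\Omega$ and $\nabla\Omega$ are bounded because $M$ is compact.

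For a general 2-plane spanned by $X+U$ and $Y+V$, I expand the curvature into components. The vertical term $t^{-2}|U\wedge V|^2$ dominates any cross terms of size $O(t)|U||\cdot|$ by Cauchy--Schwarz absorption. This yields $\sec(g_t)\geq -K$ with $K$ depending only on $\min\sec_q$ and $\sup|\Omega|,|\nabla\Omega|$.

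The scalar curvature satisfies $\sca(g_t)=\sca(q)\circ\pi+6t^{-2}-t^2|\Omega|^2\to+\infty$ uniformly. If $\Ric_q\geq 2c_0q$, choose $t_0$ so that $2t_0^2\sup|A|^2\leq c_0$ and the mixed Ricci terms $O(t)$ are absorbed. Then $\Ric(g_t)\geq c_0g_t$ for $t\leq t_0$.

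The main obstacle is the uniform sectional lower bound on mixed planes. Every other estimate is a direct O'Neill computation, but this one needs care with the $\nabla A$ terms. For those I would follow the standard estimate in \cite[Remark 3.3]{SearleWilhelm2015}, or equivalently the Cheeger-deformation argument, where the vertical $t^{-2}$ curvature absorbs the $O(1)$ mixed contributions. Near the singular orbits, all computations are done $\Gamma$-equivariantly in the uniformizing charts $\tilde U\times G$, where everything is smooth. Compactness of $\mathcal X$ then makes all the constants uniform.
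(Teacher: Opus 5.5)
Your proof agrees with the paper's everywhere except at the sectional curvature bound. You use the same connection metrics $g_t=\pi^*q+t^2g_G(A,A)$ and the same averaging for the commuting group. You use the graph of $\pi$ as a correspondence of distortion at most $\pi t$, and the O'Neill/HST Ricci block for the scalar and Ricci claims; your bookkeeping there is right, since the mixed Ricci entries are $O(t)$. For $\sec(g_t)\geq-K$, which you call the main obstacle and finally defer to a citation, the paper needs no computation. The metric $g_t$ is exactly the Cheeger deformation of the fixed metric $g_1$ with parameter $t^{-2}-1$, i.e.\ the quotient of $(M^{d+3}\times G,\,g_1\oplus(t^{-2}-1)^{-1}g_G)$ by the free diagonal action. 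O'Neill's inequality for this submersion and $\sec(g_G)\geq0$ then give $\sec(g_t)\geq\min\{\inf\sec(g_1),0\}$, and $g_t\leq g_1$ gives the diameter bound. That argument involves no absorption, contrary to how you describe it in your last paragraph.

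Your direct route also closes, but not for the reason you give. For nonabelian $G$ the components $R_{g_t}(X,Y,U,V)$ and $R_{g_t}(X,U,Y,V)$ on $g_t$-unit vectors are not $O(t)$. Write $U=t^{-1}\xi^*$ and $V=t^{-1}\zeta^*$ with $\xi,\zeta$ unit for $g_G$. The $\nabla_UA$ terms then contribute order-one quantities, proportional up to $O(t^2)$ to $g_G(\Omega(X,Y),[\xi,\zeta])$. The reason is that the $G$-invariant field $A_XY$ has $g_t$-length of order $t$, while $\nabla_U$ carries a factor $t^{-1}$ and differentiates $A_XY$ nontrivially along the fibre. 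No absorption is needed, however. Take a $g_t$-orthonormal basis $X+U$, $Y+V$ of the plane. Components with exactly one horizontal slot vanish because the fibres are totally geodesic. All other components with a horizontal slot are bounded uniformly for $0<t\leq1$, and the purely vertical term $t^{-2}|U\wedge V|^2$ is nonnegative, so $\sec(g_t)\geq-K$. The Cheeger route yields an explicit $K$ and avoids $\nabla A$ altogether. The direct route gives the full curvature asymptotics, which you need anyway for the Ricci and scalar statements.
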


\begin{proof}
	Choose a $G$-equivariant connection form $A$ and, for $0<t\leq1$, set
	\begin{equation}\label{eq:connection-metrics}
		g_t=\pi^*q+t^2g_G(A,A).
	\end{equation}
	Local freeness ensures that these are smooth metrics on $M^{d+3}$. If a commuting compact group is given, average $A$ over that group; the resulting metrics have the required additional invariance.

	For $t<1$, the metric $g_t$ is the Cheeger deformation of $g_1$ with parameter $t^{-2}-1$ \cite{Cheeger}: it is induced by the free diagonal quotient of
	\begin{equation*}
		(M^{d+3}\times G,\;g_1\oplus(t^{-2}-1)^{-1}g_G).
	\end{equation*}
	Since $g_G$ has nonnegative sectional curvature, O'Neill's formula \cite{ONeill} gives
	\begin{equation*}
		\sec(g_t)\geq\min\{\inf\sec(g_1),0\}.
	\end{equation*}
	Also $g_t\leq g_1$, giving the diameter bound. The quotient metric is $q$, and each orbit has intrinsic diameter at most $\pi t$. The quotient-distance formula therefore gives
	\begin{equation*}
		0\leq d_{g_t}(z,z')-d_q(\pi z,\pi z')\leq\pi t.
	\end{equation*}
	The graph of $\pi$ is a correspondence of distortion at most $\pi t$, proving the Gromov--Hausdorff bound.

	Tracing the connection-metric curvature formulas in \cite[Proposition 3.6]{HST} gives, in a $g_t$-orthonormal horizontal--vertical splitting,
	\begin{equation}\label{eq:ricci-block}
		\Ric(g_t)=
		\begin{pmatrix}
		\Ric(q)+O(t^2)&O(t)\\
		O(t)&2t^{-2}I_3+O(t^2)
		\end{pmatrix}.
	\end{equation}
	Here the horizontal block denotes the pullback of $\Ric(q)$. The calculation applies in finite orbifold charts, and compactness makes the error estimates uniform in operator norm. Taking the trace yields
	\begin{equation}\label{eq:scalar-collapse}
		\sca(g_t)=6t^{-2}+\sca(q)\circ\pi+O(t^2),
	\end{equation}
	which proves uniform divergence of the scalar curvature.

	If $\Ric(q)\geq\lambda q$ for $\lambda>0$, the principal diagonal part of \eqref{eq:ricci-block} is bounded below by $\min\{\lambda,2\}I$ for $t\leq1$, while the remaining matrix has operator norm $O(t)$. After restricting to $0<t\leq t_0$, this gives $\Ric(g_t)\geq c_0g_t$ with $c_0=\min\{\lambda/2,1\}>0$.
\end{proof}

We next record the topological facts used to recognize the quotient manifolds and compute their obstruction. The double suspension theorem of Edwards and Cannon \cite{EdwardsSuspensions2006,CannonDoubleSuspension1979} implies
\begin{equation}\label{eq:local-double-suspension}
	C^\circ(H^3)\times\mathbb R\cong\mathbb R^5.
\end{equation}
Here $C^\circ(H^3)$ denotes the open cone on $H^3$. In particular, $(I^*\backslash D^4)\times\mathbb R^k$ is locally Euclidean at the singular stratum whenever $k\geq1$.

\begin{lemma}\label{lem:link-obstruction}
	Let $\Sigma^{n-4}$ be a closed oriented smooth manifold, with $n\geq5$, and let $M_0^n$ be a compact oriented smooth manifold with boundary diffeomorphic to $H^3\times\Sigma^{n-4}$. Form the closed oriented topological manifold
	\begin{equation*}
		X^n=M_0^n\cup_{H^3\times\Sigma^{n-4}}\bigl(C(H^3)\times\Sigma^{n-4}\bigr)
	\end{equation*}
	using an orientation-compatible boundary diffeomorphism. Let $v$ be the cone vertex and let $i:\Sigma^{n-4}\to X^n$ be given by $i(s)=(v,s)$. Then
	\begin{equation}\label{eq:ks-link}
		\operatorname{ks}(X^n)\cap[X^n]=i_*[\Sigma^{n-4}]\in H_{n-4}(X^n;\mathbb Z/2).
	\end{equation}
\end{lemma}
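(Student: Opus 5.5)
We prove \eqref{eq:ks-link} by localizing the Kirby--Siebenmann class to a neighbourhood of the singular stratum. The idea is to compare $X^n$ with a manifold in which the cone $C(H^3)$ is replaced by a smooth contractible-free filling whose obstruction is computable. Set $S=i(\Sigma^{n-4})=\{v\}\times\Sigma^{n-4}$ and $N=C(H^3)\times\Sigma^{n-4}$. By \eqref{eq:local-double-suspension}, $N$ is a topological manifold. The complement $X^n\setminus S$ is smooth, being $M_0^n$ with an open collar $(0,1]\times H^3\times\Sigma^{n-4}$ attached. Since a smooth structure is in particular PL, $\operatorname{ks}(X^n)$ restricts to zero on $X^n\setminus S$. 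The relative Kirby--Siebenmann obstruction therefore gives a class in $H^4(X^n,X^n\setminus S;\mathbb Z/2)$ that maps to $\operatorname{ks}(X^n)$. By excision and Poincar\'e--Lefschetz (Alexander) duality,
\[
H^4(X^n,X^n\setminus S;\mathbb Z/2)\cong H^4(N,N\setminus S;\mathbb Z/2)\cong H_{n-4}(S;\mathbb Z/2)\cong\mathbb Z/2
\]
when $\Sigma$ is connected; in general we work componentwise. Capping with $[X^n]$ is compatible with this duality. It follows that $\operatorname{ks}(X^n)\cap[X^n]=\epsilon\, i_*[\Sigma^{n-4}]$, where $\epsilon\in\mathbb Z/2$ is the local obstruction along each component, and it remains to show $\epsilon=1$.

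The coefficient $\epsilon$ is determined locally near a point of $S$, so we may work in a chart $C^\circ(H^3)\times\mathbb R^{n-4}\cong\mathbb R^n$. Here $\epsilon$ is the obstruction to extending the product smooth structure on $(C^\circ(H^3)\setminus v)\times\mathbb R^{n-4}$ across $\{v\}\times\mathbb R^{n-4}$, up to concordance. This is the classical Siebenmann/Scharlemann computation, which identifies the obstruction with the Rokhlin invariant $\mu(H^3)\in\mathbb Z/2$. To make it concrete, I would argue as follows. First take $\Sigma=T^{n-4}$ and use the Kirby torus trick; alternatively, reduce to $n=5$, $\Sigma=S^1$ by naturality of $\operatorname{ks}$ under restriction to open subsets and products with $\mathbb R$. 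In the compact model $X=C(H^3)\times S^1\cup_\partial(P\times S^1)$, with $P$ the negative $E_8$ plumbing (so $\partial P=H^3$), there is a smooth $4$-dimensional transversal $P\cup C(H^3)$ to the $S^1$-direction. This transversal is a closed topological $4$-manifold with even intersection form $E_8$ and signature $8$. Its Kirby--Siebenmann invariant is $\sigma/8=1\pmod 2$ by Freedman/Rokhlin, and its normal bundle is trivial. Naturality of $\operatorname{ks}$ under codimension-zero embeddings and products then gives $\langle\operatorname{ks},[\text{transversal}]\rangle=1$, which is exactly the dual statement $\epsilon=1$.

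The expected main obstacle is making this identification of $\epsilon$ rigorous without circularity. One needs the relative version of $\operatorname{ks}$ and its compatibility with Poincar\'e duality. One also needs the facts that the local obstruction is a concordance invariant independent of $\Sigma$ and that it is computed by the Rokhlin invariant $\mu(H^3)=1$. I would cite Siebenmann's ``periodicity'' and Kirby--Siebenmann \cite[Section 3.8]{Quinn} for the product/restriction naturality. The orientation-compatibility of the gluing is used only to ensure that $X^n$ is oriented, so that $[X^n]$ makes sense integrally. With $\mathbb Z/2$ coefficients, orientations play no further role.
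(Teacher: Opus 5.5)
Your localization step is correct. Because $X^n\setminus S$ is smooth, the relative obstruction lies in $H^4(X^n,X^n\setminus S;\mathbb Z/2)\cong H^0(\Sigma^{n-4};\mathbb Z/2)$, so $\operatorname{ks}(X^n)\cap[X^n]=\epsilon\,i_*[\Sigma^{n-4}]$ componentwise. This is the obstruction cycle on the codimension-four stratum that the paper takes from the link formula \cite[Proposition 3.7 and Section 3.8]{Quinn}. Insofar as you simply invoke the classical identification $\epsilon=\mu(H^3)$, with $\mu(H^3)=1$ via the spin $E_8$ plumbing, you have the paper's proof.

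The argument you actually give for $\epsilon=1$, however, has a false step. The transversal $P\cup_{H^3}C(H^3)$ is not a topological $4$-manifold. Small punctured neighbourhoods of the vertex deformation retract onto $H^3$, whose fundamental group $I^*$ is nontrivial, so the vertex is not a manifold point; this is exactly why \eqref{eq:local-double-suspension} needs the extra $\mathbb R$ factor. It is only a homology manifold, so it has no Kirby--Siebenmann class of its own, and Freedman's formula $\operatorname{ks}\equiv\sigma/8 \pmod 2$ does not apply to it. Neither a ``trivial normal bundle'' nor naturality under codimension-zero embeddings tells you what $\operatorname{ks}(X)$ evaluates to on this codimension-one singular cycle. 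As written, the step assumes what is to be proved.

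The computation can be repaired along your lines, but it needs a further idea: pass to the open $5$-dimensional neighbourhood of the transversal and identify it with a genuine manifold product.

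\begin{enumerate}
\item Let $\Delta^4$ be Freedman's contractible topological manifold with $\partial\Delta^4=H^3$, and set $E^4=P\cup_{H^3}\Delta^4$. This is a closed spin topological $4$-manifold with even form of signature $\pm8$, so $\operatorname{ks}(E^4)=1$.
\item Crushing $\Delta^4$ (minus a collar) to the vertex gives a cell-like map $E^4\times S^1\to Q\times S^1$, where $Q=P\cup_{H^3}C(H^3)$. Siebenmann's approximation theorem in dimension $5$ makes it homotopic to a homeomorphism.
\item Hence $\operatorname{ks}(Q\times S^1)$ pulls back to $\operatorname{pr}_1^*\operatorname{ks}(E^4)$. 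It therefore pairs to $1$ with the mod-$2$ cycle $[Q\times\{\mathrm{pt}\}]$, which meets $\{v\}\times S^1$ in one point. So $\epsilon=1$, and your reduction to $n=5$ via product and restriction naturality finishes the proof.
\end{enumerate}

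This is the cell-like device from the proof of \autoref{thm:simply-connected}, run in the opposite direction. The paper uses the lemma to deduce $\operatorname{ks}(M^4_{\mathrm{cap}})\ne0$, whereas the repaired argument takes Freedman's $\operatorname{ks}(E^4)=1$ as input. It thereby avoids the Rokhlin-invariant link formula, at the cost of Freedman's $4$-dimensional theory and the $5$-dimensional cell-like approximation theorem.
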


\begin{proof}
	Choose a PL homology-manifold model compatible with the smooth boundary gluing and the product cone. The smooth complement contributes zero to its local obstruction cycle for the Kirby--Siebenmann class. Along $\{v\}\times\Sigma^{n-4}$, the normal link is $H^3$, and its coefficient in this cycle is the Rokhlin invariant $\mu(H^3)$, by the Cohen--Saito--Sullivan link formula; see \cite[Proposition 3.7 and Section 3.8]{Quinn}. The Poincar\'e sphere bounds the smooth spin $E_8$ plumbing of signature $\pm8$, so $\mu(H^3)=1$ in $\mathbb Z/2$. Thus the obstruction cycle is the mod-$2$ fundamental cycle of $\{v\}\times\Sigma^{n-4}$, proving \eqref{eq:ks-link}.
\end{proof}

\section{Simply connected limits and lens-space examples}\label{sec:positive}

Consider the classical degree-one del Pezzo surface
\begin{equation*}
	Q_+^4=\{w^2+z^3+xy^5=0\}\subset\mathbb P(1,1,2,3),
\end{equation*}
where $x,y,z,w$ have weights $1,1,2,3$, respectively; see \cite[Table 3, no.~$1^\circ$]{CheltsovProkhorov2021}. It has a single $E_8$ singularity at $v=[1:0:0:0]$, with local chart $I^*\backslash\mathbb C^2$. Write $\mathcal Q_+^4$ for its natural effective complex orbifold structure. Its anticanonical line bundle $L=-K_{Q_+^4}=\mathcal O_{Q_+^4}(1)$ is ample and Cartier. With $u=c_1(L)$, the degree-one condition gives $\langle u^2,[Q_+^4]\rangle=1$.

The minimal resolution is a weak del Pezzo surface of degree one, whose underlying smooth manifold is $M_{\mathrm{res}}^4=\mathbb{CP}^2\#8\,\overline{\mathbb{CP}}^{\,2}$; its exceptional curves form the $E_8$ configuration of eight $(-2)$-spheres \cite[Section 2 and Appendix A, no.~$1^\circ$]{CheltsovProkhorov2021}. Let $W_E^4$ be a regular neighborhood of these curves, the negative $E_8$ plumbing with boundary $H^3$, and let $W_+^4$ be its complement. Thus
\begin{equation}\label{eq:plumbing-decomposition}
	M_{\mathrm{res}}^4=W_E^4\cup_{H^3}W_+^4,
	\qquad Q_+^4=W_+^4\cup_{H^3}C(H^3).
\end{equation}
Here $\partial W_+^4=-H^3$, and the cone in the second decomposition is oriented accordingly. We record the elementary topological consequences needed below.

\begin{lemma}\label{lem:fano-topology}
	The manifold $W_+^4$ is homotopy equivalent to $S^2$. There is a homotopy equivalence $f:Q_+^4\to\mathbb{CP}^2$ such that $f^*h=u$, where $h=c_1(\mathcal O_{\mathbb{CP}^2}(1))$.
\end{lemma}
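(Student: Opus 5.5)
The plan is to deduce everything from the decomposition \eqref{eq:plumbing-decomposition} and the fact that $H^3$ is an integral homology sphere. The steps are: the homology of $W_+^4$ from Mayer--Vietoris, then $\pi_1(W_+^4)=1$, then Whitehead's theorem for $W_+^4\simeq S^2$, and finally the passage to $Q_+^4$.

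\emph{Homology of $W_+^4$.} Since $\tilde H_*(H^3)$ vanishes except in degree $3$, Mayer--Vietoris for $M_{\mathrm{res}}^4=W_E^4\cup_{H^3}W_+^4$ gives $H_k(M_{\mathrm{res}}^4)\cong H_k(W_E^4)\oplus H_k(W_+^4)$ for $k=1,2$. We have $H_2(W_E^4)\cong\mathbb Z^8$, carrying the negative $E_8$ form, and $H_2(M_{\mathrm{res}}^4)\cong\mathbb Z^9$. Hence $H_1(W_+^4)=0$ and $H_2(W_+^4)\cong\mathbb Z$. Because $E_8$ is unimodular, $H_2(M_{\mathrm{res}}^4)$ is the orthogonal sum of the $E_8$ lattice and its complement, which is $H_2(W_+^4)$. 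This complement is unimodular of rank one and has signature $-7-(-8)=1$, so its form is $\langle1\rangle$. Lefschetz duality together with the long exact sequence of $(W_+^4,\partial W_+^4)$, where the boundary is a homology sphere, gives $H_3(W_+^4)\cong H^1(W_+^4,\partial)\cong H^1(W_+^4)=0$ and $H_4(W_+^4)=0$.

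\emph{Fundamental group.} This is the step I expect to be the main obstacle. Van Kampen gives
\[
1=\pi_1(M_{\mathrm{res}}^4)=\pi_1(W_+^4)/\langle\!\langle\operatorname{im}\pi_1(H^3)\rangle\!\rangle,
\]
since $W_E^4$ is simply connected. It therefore suffices to show that the image of $\pi_1(H^3)$ in $\pi_1(W_+^4)$ is trivial. The group $\pi_1(H^3)$ is generated by meridians $\mu_i$ of the eight $(-2)$-curves $C_i$. In the plumbing boundary these satisfy the relations that adjacent meridians commute and $\mu_i^{2}=\prod_{j\sim i}\mu_j$, up to orientation conventions.

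I would first use the description of the weak del Pezzo surface in \cite[Appendix A, no.~$1^\circ$]{CheltsovProkhorov2021} to find a $(-1)$-curve $E$ meeting exactly one curve $C_j$, transversally in a single point. One candidate is a component of the anticanonical pencil, or the last exceptional curve of the blow-up sequence of $\mathbb{CP}^2$. The curve $E$ minus a small disk is then a disk in $W_+^4$ bounding $\mu_j$, so $\mu_j=1$ in $\pi_1(W_+^4)$. The plumbing relations then propagate triviality along the $E_8$ tree. If $C_j$ is a leaf, its relation reads $\mu_j^2=\mu_k$ for its unique neighbour $C_k$, so $\mu_k=1$. If $C_j$ is not a leaf, I would solve the relations inductively from the known trivial generator, using the tree structure. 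In either case all $\mu_i$ become trivial, and hence $\pi_1(W_+^4)=1$.

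\emph{Homotopy equivalences.} Represent a generator of $H_2(W_+^4)\cong\pi_2(W_+^4)$ (Hurewicz) by a map $S^2\to W_+^4$. This map is a homology isomorphism between simply connected CW complexes, so Whitehead's theorem gives $W_+^4\simeq S^2$.

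For $Q_+^4=W_+^4\cup C(H^3)$, van Kampen shows that $Q_+^4$ is simply connected, since the cone is contractible. Mayer--Vietoris gives the homology $\mathbb Z,0,\mathbb Z,0,\mathbb Z$. The cup square of the degree-two generator is computed by the form $\langle1\rangle$ on $H_2(W_+^4)$, using excision $H^4(Q_+^4)\cong H^4(W_+^4,\partial)$. Since $\langle u^2,[Q_+^4]\rangle=1$, the class $u$ is a generator, and $H^*(Q_+^4)\cong\mathbb Z[u]/(u^3)$.

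The space $Q_+^4$ is a projective variety, hence a finite $4$-dimensional CW complex. Let $Q_+^4\to\mathbb{CP}^\infty$ classify $u$, and apply cellular approximation to compress it to a map $f:Q_+^4\to\mathbb{CP}^2$ with $f^*h=u$. Then $f^*$ is a ring isomorphism on integral cohomology, hence $f$ is a homology isomorphism. Both spaces are simply connected, so Whitehead's theorem shows that $f$ is a homotopy equivalence.
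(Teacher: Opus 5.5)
Your Mayer--Vietoris computations, the Hurewicz--Whitehead argument for $W_+^4\simeq S^2$, and the construction of $f$ by compressing a classifying map for $u$ into $\mathbb{CP}^2$ agree with the paper. The gap is in the fundamental-group step. You claim that, whichever curve $C_j$ the $(-1)$-curve $E$ meets, the plumbing relations force every $\mu_i=1$; this is false. The $E_8$ tree is star-shaped, and the meridian $\mu_c$ of its trivalent curve is the regular fibre of the Seifert fibration $H^3\to S^2(2,3,5)$. That fibre is the central element $-1\in I^*\cong\pi_1(H^3)$. Setting $\mu_c=1$ turns the plumbing relations into a presentation of the $(2,3,5)$ triangle group $I^*/\{\pm1\}\cong A_5$, not the trivial group. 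An $E$ meeting only $C_c$ would therefore show only that the image of $\pi_1(H^3)$ in $\pi_1(W_+^4)$ is a quotient of $A_5$. The vanishing of $H_1(W_+^4)$ cannot exclude a nontrivial image, since $A_5$ is perfect. So your argument needs a genuine input that you left at ``one candidate is'': a $(-1)$-curve exists and meets exactly one $C_j$ whose meridian is non-central.

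That input can be supplied. A $(-1)$-curve exists, for instance the last exceptional curve of a blow-up model of the weak del Pezzo surface. Since $E\cdot K=-1$, the class $\beta=E+K$ lies in $K^\perp$, the negative $E_8$ lattice spanned by the $C_i$. Then $\beta^2=-2$ and $\beta\cdot C_i=E\cdot C_i\ge0$, so $-\beta$ is the highest root. Hence $E\cdot C_i\ne0$ only for the leaf at the end of the long arm, the node adjacent to the affine node of $\tilde E_8$, and there the intersection number is one. From that leaf your propagation reaches the trivalent vertex, but it does not continue automatically there. Write $C_a$ for the short arm and $C_{b_1}-C_{b_2}$ for the arm of length two. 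At the trivalent vertex you learn only that the product of $\mu_a$ and $\mu_{b_1}$ is trivial. You must combine this with $\mu_a^2=1$, $\mu_{b_1}=\mu_{b_2}^{\pm2}$ and $\mu_{b_2}^3=1$ to finish. Alternatively, $\{\pm1\}$ is the only proper nontrivial normal subgroup of $I^*$, so any non-central meridian normally generates $\pi_1(H^3)$.

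The paper avoids the resolution and the presentation of $\pi_1(H^3)$ entirely. The chart $\{y\ne0\}\cap Q_+^4$ is $\mathbb C^2$, by solving $x=-w^2-z^3$, and it misses $v$. Its complement in $Q_+^4\setminus\{v\}$ is a complex curve of real codimension two. General position therefore pushes every loop of the smooth locus into $\mathbb C^2$, which gives $\pi_1(W_+^4)=1$ directly.
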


\begin{proof}
	The open set $U=\{y\ne0\}\subset Q_+^4$ is isomorphic to $\mathbb C^2$, since setting $y=1$ determines $x=-w^2-z^3$. Its complement in $Q_+^4\setminus\{v\}$ is a smooth complex curve, so general position moves every loop in the smooth locus into $U$. Thus the smooth locus, and hence its deformation retract $W_+^4$, is simply connected. The cone decomposition in \eqref{eq:plumbing-decomposition} also gives $\pi_1(Q_+^4)=0$ by van Kampen's theorem.

	Since $H^3$ is an integral homology sphere, Mayer--Vietoris for the two decompositions in \eqref{eq:plumbing-decomposition} gives $\widetilde H_i(W_+^4;\mathbb Z)=\mathbb Z$ for $i=2$ and zero otherwise, and
	\begin{equation*}
		H_i(Q_+^4;\mathbb Z)\cong H_i(\mathbb{CP}^2;\mathbb Z).
	\end{equation*}
	A map $S^2\to W_+^4$ representing a generator of $H_2(W_+^4;\mathbb Z)$ is a homotopy equivalence by the Hurewicz and homological Whitehead theorems.

	The space $Q_+^4$ has a finite four-dimensional CW structure, so a classifying map for $u$ into $\mathbb{CP}^{\infty}$ can be cellularly approximated into $\mathbb{CP}^2$. Since $\langle u^2,[Q_+^4]\rangle=1$, the resulting map $f$ induces an isomorphism on integral homology. It is therefore a homotopy equivalence by the homological Whitehead theorem.
\end{proof}

\begin{lemma}\label{lem:positive-ricci-base}
	The compact effective orbifold $\mathcal Q_+^4$ admits a K\"ahler metric $q$ and a constant $\lambda>0$ such that $\Ric(q)\geq\lambda q$, with K\"ahler form $\omega$ satisfying $[\omega/(2\pi)]=u$.
\end{lemma}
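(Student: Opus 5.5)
The plan is to apply the orbifold Calabi--Yau theorem \cite[Theorem 1.2]{Faulk}, as announced in the introduction, to the compact Kähler orbifold $\mathcal Q_+^4$. We need three inputs: a Kähler orbifold metric in the class $2\pi u$, a positive $(1,1)$-form representing $2\pi c_1$ of the orbifold, and the existence theorem itself.

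First, produce a reference Kähler orbifold metric $\omega_0$ with $[\omega_0/(2\pi)]=u$. Since $L=\mathcal O_{Q_+^4}(1)$ is ample, $Q_+^4\subset\mathbb P(1,1,2,3)$ is a suborbifold of a weighted projective space. The weighted Fubini--Study form, obtained by symplectic reduction of the flat metric on $\mathbb C^4$ by the weighted circle action, restricts to an orbifold Kähler form $\omega_0$ on $\mathcal Q_+^4$. Its class is $2\pi c_1(\mathcal O(1))=2\pi u$. One must check that the orbifold structure induced from $\mathbb P(1,1,2,3)$ agrees with the natural one near $v$, namely the chart $I^*\backslash\mathbb C^2$. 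Near $v=[1:0:0:0]$ the ambient space is smooth, because $x$ has weight $1$. The surface is a hypersurface with an $E_8$ singularity there, and the $E_8$ hypersurface singularity $\{w^2+z^3+y^5=0\}$ is exactly $I^*\backslash\mathbb C^2$. So the orbifold metric in the chart is given by pulling back $\omega_0$ along the quotient map $\mathbb C^2\to I^*\backslash\mathbb C^2\subset\mathbb C^3$. Away from $v$ the surface is smooth; by quasismoothness, the other weighted-singular points of the ambient space should not lie on $Q_+^4$.

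Second, identify the orbifold first Chern class. The singularity is canonical (an ADE quotient with $I^*\subset SU(2)$), so there is no codimension-one orbifold locus. Hence $c_1^{\mathrm{orb}}(\mathcal Q_+^4)=c_1(-K_{Q_+^4})=u$, the anticanonical class being Cartier. Thus $2\pi u$ is represented by the positive form $\omega_0$ itself.

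Third, invoke \cite[Theorem 1.2]{Faulk}. It yields an orbifold Kähler form $\omega\in[\omega_0]$ whose Ricci form equals any prescribed closed real $(1,1)$-form representing $2\pi c_1^{\mathrm{orb}}$; we prescribe $\omega_0$. Then $\Ric(\omega)=\omega_0$, and $\omega_0$ is positive. By compactness of the orbifold, and working in the finitely many uniformizing charts, $\omega_0\geq\lambda\omega$ for some $\lambda>0$. This gives $\Ric(q)\geq\lambda q$ and $[\omega/(2\pi)]=u$.

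The main obstacle is the first two steps: making sure the weighted-projective orbifold structure is the effective $I^*$-orbifold structure near $v$ and has no further singular points, so that the hypotheses of Faulk's theorem and the identification $c_1^{\mathrm{orb}}=u$ hold. As a fallback, one can take any smooth Kähler metric in $2\pi u$ on the smooth locus and glue it to the flat orbifold metric in the $I^*\backslash\mathbb C^2$ chart using $i\partial\bar\partial$ of cutoff potentials. This works because $H^2$ of a punctured neighbourhood vanishes, $H^3$ being a homology sphere, so the potentials exist locally.
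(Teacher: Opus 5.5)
Your overall route matches the paper's: a reference K\"ahler form in $2\pi u$, the identification $c_1^{\mathrm{orb}}(\mathcal Q_+^4)=u$, and then \cite[Theorem 1.2]{Faulk} with a positive prescribed Ricci form. However, Step 1 rests on a false claim: the restriction $\omega_0$ of the weighted Fubini--Study form is not an orbifold K\"ahler form at $v$.

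Near $v$ the ambient $\mathbb P(1,1,2,3)$ is smooth, so its orbifold chart there is trivial. The $I^*$-chart is intrinsic to the surface. The uniformizing map $\mathbb C^2\to I^*\backslash\mathbb C^2\subset\mathbb C^3$ is given by Klein's invariants of degrees $12$, $20$, $30$, so its differential vanishes at the origin. The pullback of $\omega_0$ is therefore a smooth invariant closed $(1,1)$-form that is positive on $\mathbb C^2\setminus\{0\}$ but vanishes, to high order, at $0$.

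Restriction does give orbifold K\"ahler forms on quasismooth hypersurfaces, but $Q_+^4$ is not quasismooth. Its affine cone is singular along the $x$-axis, which is exactly the $E_8$ point.

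The consequences are twofold. First, $\omega_0$ cannot serve as the background K\"ahler form in Faulk's theorem. Second, prescribing $\rho_\omega=\omega_0$ would give a metric whose Ricci form vanishes at $v$. Your compactness step $\omega_0\geq\lambda\omega$ fails precisely there, and you would obtain only $\Ric(q)\geq0$.

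What is missing is a correction that restores strict positivity at $v$ without changing the class. In an $I^*$-invariant uniformizing ball, take linear coordinates $\zeta$ and an invariant cutoff $\chi\equiv1$ near $0$, and set $\eta_+=\omega_0+\varepsilon\sqrt{-1}\partial\bar\partial(\chi|\zeta|^2)$. This works for three reasons:
\begin{itemize}
\item On the inner ball it adds $\varepsilon$ times the flat metric to a semipositive form, so it is positive.
\item On the compact cutoff annulus $\omega_0$ is already strictly positive, so a small $\varepsilon$ is harmless.
\item $[\eta_+]=2\pi u$, because the correction is $\partial\bar\partial$ of a global orbifold function.
\end{itemize}
Then $\eta_+$ must serve both as the initial K\"ahler form and as the prescribed Ricci form.

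Your fallback gestures at such a gluing, but it does not close the gap as written. You offer it for a different worry, namely compatibility of orbifold structures, and you keep $\omega_0$ as the prescribed Ricci form. The justification via $H^2$ of the punctured neighbourhood is also beside the point. Local potentials are available because $\omega_0$ extends smoothly over the contractible uniformizing ball; an arbitrary K\"ahler metric on the smooth locus need not. The real content is the positivity argument just given.
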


\begin{proof}
	The anticanonical line bundle $L=-K_{Q_+^4}$ is ample and Cartier. Since $I^*\subset SU(2)$ acts trivially on the local canonical line, the orbifold canonical bundle is the pullback of the Cartier canonical bundle. Hence
	\begin{equation*}
		c_1^{\mathrm{orb}}(\mathcal Q_+^4)=u.
	\end{equation*}
	We verify that $2\pi u$ is an orbifold K\"ahler class. A projective embedding defined by a sufficiently large power $L^N$ pulls back the Fubini--Study form, normalized to represent $2\pi c_1(\mathcal O(1))$. Dividing by $N$ gives a smooth closed orbifold $(1,1)$-form $\eta$ representing $2\pi u$, semipositive everywhere and positive away from $v$.

	On a small $I^*$-invariant uniformizing ball at $v$, choose linear coordinates $\zeta$ and an invariant cutoff $\chi$ supported in that ball and equal to one on a smaller ball. Extend the correction by zero and set
	\begin{equation*}
		\eta_+=\eta+\varepsilon\sqrt{-1}\partial\bar\partial(\chi|\zeta|^2).
	\end{equation*}
	The correction is positive definite on the inner ball, while $\eta$ is already positive on the compact cutoff annulus. Thus, for sufficiently small $\varepsilon>0$, $\eta_+$ is a smooth strictly positive closed orbifold $(1,1)$-form with $[\eta_+]=2\pi u$.

	Apply the orbifold Calabi--Yau theorem \cite[Theorem 1.2]{Faulk}, taking $\eta_+$ as the initial K\"ahler form and prescribing the Ricci form to be $\eta_+$. It yields a smooth orbifold K\"ahler form $\omega$ such that
	\begin{equation*}
		[\omega]=[\eta_+]=2\pi u,\qquad \rho_\omega=\eta_+>0.
	\end{equation*}
	The associated orbifold Riemannian metric $q$ therefore has positive Ricci curvature, and compactness gives $\Ric(q)\geq\lambda q$ for some $\lambda>0$.
\end{proof}

\begin{proof}[Proof of \autoref{thm:simply-connected}]
	Apply \autoref{lem:smooth-total-space} to $\mathcal Q_+^4$ to obtain a closed smooth manifold $M_+^7$. Apply \autoref{prop:collapse-metrics} directly to $M_+^7\to\mathcal Q_+^4$ with the positive-Ricci metric $q$ of \autoref{lem:positive-ricci-base}. Put $k=n-4\geq2$ and take the product with a unit round $S^k$. The smooth total space is $M^{n+3}=M_+^7\times S^k$, and the limit is $X^n=Q_+^4\times S^k$. If the metrics on $M_+^7$ have Ricci curvature at least $c_7>0$ and sectional curvature at least $-K_7$, then the product has Ricci curvature at least $\min\{c_7,1\}$, since $k-1\geq1$, and sectional curvature at least $-\max\{K_7,0\}$. A fixed rescaling by $\Lambda=\max\{K_7,1\}$ gives the stated bounds with $c=\min\{c_7,1\}/\Lambda$, independently of $n$.

	The double suspension theorem makes $X^n$ a topological manifold. A neighborhood of its singular stratum $\{v\}\times S^k$ is $C(H^3)\times S^k$, and its complement is smooth. By \autoref{lem:link-obstruction},
	\begin{equation*}
		\operatorname{ks}(X^n)\cap[X^n]=[\{v\}\times S^k]\ne0,
	\end{equation*}
	where nonvanishing follows by projection to $S^k$. \autoref{lem:fano-topology} shows that both $W_+^4$ and $Q_+^4$ are simply connected. Hence $M_+^7$ is simply connected by \autoref{lem:smooth-total-space}; since $k\geq2$, both $M^{n+3}$ and $X^n$ are simply connected.

	To identify the homeomorphism type, choose a compact contractible topological manifold $M_{\mathrm{ctr}}^4$ with boundary $H^3$, using \cite[Theorem 1.4']{Freedman1982}, and form
	\begin{equation*}
		M_{\mathrm{cap}}^4=W_+^4\cup_{H^3}M_{\mathrm{ctr}}^4.
	\end{equation*}
	Van Kampen's theorem and the Mayer--Vietoris sequence show that $M_{\mathrm{cap}}^4$ is simply connected and has intersection form $\langle1\rangle$. Retain a boundary collar in $M_{\mathrm{ctr}}^4$ and collapse its remaining compact contractible core to the cone vertex. Extending by the identity on $W_+^4$ gives a cell-like map $M_{\mathrm{cap}}^4\to Q_+^4$. Its product with the identity of $S^k$ is a cell-like map
	\begin{equation*}
		M_{\mathrm{cap}}^4\times S^k\longrightarrow Q_+^4\times S^k=X^n.
	\end{equation*}
	Both spaces are closed topological manifolds of dimension $n\geq6$, so the cell-like approximation theorem \cite[Approximation Theorem A]{SiebenmannApprox1972} gives a homeomorphism. The product formula for the stable tangent Kirby--Siebenmann class gives
	\begin{equation*}
		\operatorname{ks}(M_{\mathrm{cap}}^4\times S^k)
		=\operatorname{pr}_1^*\operatorname{ks}(M_{\mathrm{cap}}^4).
	\end{equation*}
	The left-hand side is nonzero by the calculation for $X^n$. Thus $\operatorname{ks}(M_{\mathrm{cap}}^4)\ne0$, and Freedman's classification identifies $M_{\mathrm{cap}}^4$ with $M_{\mathrm{Ch}}^4$. This proves the asserted homeomorphism type.
\end{proof}

For every integer $r\geq1$, let $p_r:\widehat X_r^5\to Q_+^4$ be the unit-circle bundle of $L^{\otimes r}$, viewed also as a smooth orbifold circle bundle. The line bundle is Cartier on the coarse space, so the local $I^*$-action on its fibre is trivial. In particular, the underlying map is an ordinary topological principal circle bundle, including over $v$. Its local model at the distinguished fibre is
\begin{equation}\label{eq:circle-local-model}
	(I^*\backslash D^4)\times S^1\longrightarrow I^*\backslash D^4.
\end{equation}
We use $\widehat{\mathcal X}_r^5$ when referring specifically to its smooth orbifold structure.

\begin{proposition}\label{prop:lens-topology}
	The space $\widehat X_r^5$ is a closed orientable topological manifold and satisfies
	\begin{equation*}
		\widehat X_r^5\simeq L^5(r;1,1,1),\qquad \pi_1(\widehat X_r^5)=\mathbb Z/r.
	\end{equation*}
	Furthermore, $\operatorname{ks}(\widehat X_r^5)\ne0$ if and only if $r$ is even.
\end{proposition}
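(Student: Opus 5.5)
The plan is to treat $\widehat X_r^5$ as a union of a smooth circle bundle over $W_+^4$ and the singular piece $C(H^3)\times S^1$. \autoref{lem:link-obstruction} will then compute the obstruction, and a pullback argument along the homotopy equivalence $f$ of \autoref{lem:fano-topology} will give the homotopy type.

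\emph{Manifold structure and orientability.} Since $H^2(H^3;\mathbb Z)=0$, the restriction of $p_r$ to $\partial W_+^4=H^3$ is trivial. By the local model \eqref{eq:circle-local-model}, the bundle over the cone neighbourhood is $C(H^3)\times S^1$. Hence
\begin{equation*}
	\widehat X_r^5=M_0^5\cup_{H^3\times S^1}\bigl(C(H^3)\times S^1\bigr),\qquad M_0^5=p_r^{-1}(W_+^4),
\end{equation*}
where $M_0^5$ is a compact smooth manifold with boundary $H^3\times S^1$. By \eqref{eq:local-double-suspension}, $C^\circ(H^3)\times S^1$ is locally Euclidean, so $\widehat X_r^5$ is a closed topological $5$-manifold. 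It is orientable because it is a principal circle bundle over the oriented space $Q_+^4$: it is oriented on the smooth part, and the complement of the codimension-four stratum suffices.

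\emph{Homotopy type and fundamental group.} The lens space $L^5(r;1,1,1)=S^5/(\mathbb Z/r)$ is the principal circle bundle over $\mathbb{CP}^2$ with Euler class $rh$, possibly up to a sign that can be absorbed by reversing the circle orientation. Since $p_r$ is an honest principal $S^1$-bundle over $Q_+^4$ with Euler class $ru=f^*(rh)$, it is isomorphic to the pullback $f^*$ of that bundle. The induced map of total spaces is a fibrewise map over the homotopy equivalence $f$ that is the identity on fibres. By the five lemma on the long exact homotopy sequences together with Whitehead's theorem (or Dold's theorem on fibrewise maps), it is a homotopy equivalence. This gives $\widehat X_r^5\simeq L^5(r;1,1,1)$, and hence $\pi_1=\mathbb Z/r$. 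Alternatively, the exact sequence $\pi_2(Q_+^4)=\mathbb Z\xrightarrow{\cdot r}\pi_1(S^1)\to\pi_1(\widehat X_r^5)\to0$ shows directly that $\pi_1=\mathbb Z/r$, generated by the fibre.

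\emph{Kirby--Siebenmann class.} Apply \autoref{lem:link-obstruction} with $n=5$ and $\Sigma=S^1$, the fibre over $v$. Orientations should be matched by choosing the cone orientation as in \eqref{eq:plumbing-decomposition}; this is irrelevant mod $2$ in any case. The lemma gives $\operatorname{ks}(\widehat X_r^5)\cap[\widehat X_r^5]=i_*[S^1]\in H_1(\widehat X_r^5;\mathbb Z/2)$, and Poincar\'e duality makes the cap product an isomorphism. The fibre class generates $H_1(\widehat X_r^5;\mathbb Z)=\mathbb Z/r$, because it generates $\pi_1$ by the sequence above and the fibre over $v$ is freely homotopic to any other fibre. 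So its image in $H_1(\;;\mathbb Z/2)\cong(\mathbb Z/r)\otimes\mathbb Z/2$ is nonzero exactly when $r$ is even. Here $\mathrm{Tor}(H_0,\mathbb Z/2)=0$ and the universal coefficient map is injective, so the mod-$2$ reduction of a generator is nonzero precisely when $r$ is even.

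\emph{Main obstacle.} The step I expect to need the most care is verifying the hypotheses of \autoref{lem:link-obstruction} precisely: that the boundary identification $H^3\times S^1$ is an orientation-compatible diffeomorphism matching the smooth structure on $M_0^5$, and that the singular fibre is exactly $\{v\}\times S^1$ under this product decomposition. Both should follow from the triviality of the Cartier line bundle near $v$.
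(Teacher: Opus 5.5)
Your proposal is correct and follows essentially the same route as the paper: you decompose $\widehat X_r^5$ as a smooth circle bundle over $W_+^4$ glued to $C(H^3)\times S^1$, pull back the Chern-class-$r$ circle bundle over $\mathbb{CP}^2$ along $f$ to get the lens-space homotopy type, and combine \autoref{lem:link-obstruction} with the fact that the fibre generates $\pi_1=\mathbb Z/r$ to detect $\operatorname{ks}$ mod $2$. Your extra details (classifying circle bundles by $H^2$, the five-lemma/Whitehead argument, and the universal coefficient check) make explicit steps that the paper states only briefly.
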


\begin{proof}
	The local model \eqref{eq:circle-local-model} and \eqref{eq:local-double-suspension} give the topological manifold property along $\gamma_r=p_r^{-1}(v)$. Away from this circle the total space is smooth. The complex orientation of the base and the orientation of the fibre orient $\widehat X_r^5$. The circle bundle is trivial over the contractible cone neighborhood of $v$, with a smooth trivialization along its boundary. Thus a neighborhood of $\gamma_r$ is $C(H^3)\times S^1$, attached to the smooth complement along $H^3\times S^1$.

	The homotopy equivalence $f$ of \autoref{lem:fano-topology} pulls the circle bundle of Chern class $r$ over $\mathbb{CP}^2$ back to $p_r$. The induced map of total spaces is a homotopy equivalence, since it covers a homotopy equivalence and restricts to an isomorphism on each fibre. The circle bundle of Chern class $r$ over $\mathbb{CP}^2$ has total space $L^5(r;1,1,1)$, up to reversing the circle convention. This proves the homotopy assertion.

	The normal link along $\gamma_r$ is $H^3$. Thus \autoref{lem:link-obstruction} gives
	\begin{equation}\label{eq:ks-circle}
		\operatorname{ks}(\widehat X_r^5)\cap[\widehat X_r^5]=[\gamma_r]\in H_1(\widehat X_r^5;\mathbb Z/2).
	\end{equation}
	The circle-bundle homotopy sequence contains
	\begin{equation*}
		\pi_2(Q_+^4)\cong\mathbb Z\xrightarrow{\times r}\mathbb Z
		\longrightarrow\pi_1(\widehat X_r^5)\longrightarrow0.
	\end{equation*}
	Consequently the fibre represents a generator of $\pi_1(\widehat X_r^5)=\mathbb Z/r$. Its mod-$2$ homology class is nonzero precisely when $r$ is even. Equation \eqref{eq:ks-circle} and Poincar\'e duality prove the asserted equivalence.
\end{proof}

For even $r$, these spaces have the homotopy types of smooth lens spaces but nonzero Kirby--Siebenmann classes. The parity restriction is necessary for any non-PL closed manifold $M^5$ with finite cyclic fundamental group: when the group has odd order, Poincar\'e duality gives
\begin{equation*}
	H^4(M^5;\mathbb Z/2)\cong H_1(M^5;\mathbb Z/2)=0.
\end{equation*}

\begin{lemma}\label{lem:circle-ricci}
	There is a circle-invariant orbifold metric $q_1$ on $\widehat{\mathcal X}_1^5$ with positive Ricci curvature. The finite quotients $\widehat X_r^5=(\mathbb Z/r)\backslash\widehat X_1^5$ carry induced metrics $q_r$ with a common sectional curvature lower bound and a common positive Ricci curvature lower bound.
\end{lemma}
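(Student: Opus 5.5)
The plan is to build $q_1$ as a connection metric on the orbifold circle bundle $p_1:\widehat{\mathcal X}_1^5\to\mathcal Q_+^4$, with a short circle fibre over the positive-Ricci K\"ahler base of \autoref{lem:positive-ricci-base}. Then I would pass to the cyclic quotients, which are isometric quotients. First I would realize $\widehat X_1^5$ as the unit circle bundle of $L$ with a Hermitian metric whose Chern connection $\theta$ has curvature $d\theta=-\sqrt{-1}\,\omega$, or a normalized multiple of it. This is possible because $[\omega/(2\pi)]=u=c_1(L)$ and $L$ is Cartier, so the local $I^*$-action on the fibre is trivial. Set
\begin{equation*}
	q_{1,s}=p_1^*q+s^2\theta\otimes\theta,\qquad 0<s\leq1.
\end{equation*}
This is a $U(1)$-invariant smooth orbifold metric; in uniformizing charts it is a genuine Riemannian submersion metric with totally geodesic flat fibres.

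The key computation is O'Neill's formula for a circle bundle with connection of curvature $F=d\theta$. Horizontally one gets $\Ric(X,X)=\Ric(q)(X,X)-\tfrac{s^2}{2}|\iota_XF|^2$. The mixed terms are controlled by $s\,|\delta F|$, which vanishes because $F$ is a multiple of the K\"ahler form $\omega$, hence parallel. The vertical term is $\Ric(V,V)=\tfrac{s^4}{4}|F|^2/s^2\cdot$(normalization), which is positive of order $s^2$ on the unit vertical vector. Since $F$ is a constant multiple of $\omega$ and $|\iota_X\omega|=|X|$, the horizontal error is uniformly $O(s^2)$ by compactness of the orbifold. Choosing $s$ small then gives $\Ric(q_{1,s})\geq\kappa\,q_{1,s}$ with $\kappa>0$; this is the standard Kähler--Sasaki-type positive Ricci lift. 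I fix such an $s$ and call the metric $q_1$. Its sectional curvature is bounded below by compactness, since all O'Neill terms are bounded.

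For the finite quotients, the subgroup $\mathbb Z/r\subset U(1)$ acts freely on the fibres, including over $v$, because the fibre action there is an honest principal circle action. The quotient $(\mathbb Z/r)\backslash\widehat X_1^5$ is the unit circle bundle of $L^{\otimes r}$, via the map $\xi\mapsto\xi^{\otimes r}$, so it is $\widehat X_r^5$. Since $\mathbb Z/r$ acts by isometries of $q_1$, the induced metric $q_r$ is locally isometric to $q_1$ in orbifold charts. Local curvature quantities are therefore unchanged: $\sec(q_r)\geq\inf\sec(q_1)$ and $\Ric(q_r)\geq\kappa\,q_r$ for every $r$, with the same constants.

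The main obstacle is checking that the O'Neill estimates are legitimate at the singular fibre $\gamma_r$, and that the identification of the quotient with $\widehat X_r^5$ respects orbifold structures. I would handle both upstairs in the chart $D^4\times S^1$ with $I^*$ acting only on $D^4$. There the metric is an honest $I^*\times U(1)$-invariant connection metric, the curvature formulas hold verbatim, and the $\mathbb Z/r$ action commutes with $I^*$ and is free. So everything reduces to smooth invariant computations on a compact family of charts.
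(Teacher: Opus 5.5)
Your proposal is correct and follows essentially the same route as the paper: the connection metric $p_1^*q+a^2\theta^2$ with $d\theta=p_1^*\omega$, O'Neill's formulas with the mixed term vanishing because $\delta\omega=0$, and the identification of $\widehat X_r^5$ with the isometric $\mathbb Z/r$-quotient via $\xi\mapsto\xi^{\otimes r}$. The paper just makes your smallness choice explicit, and this also fixes your garbled vertical-term expression: since $|\omega|^2=2$ and $|\iota_\xi\omega|^2=1$, one has $\Ric_{q_1}(U,U)=\tfrac{a^2}{2}|\omega|^2=a^2$ and $\Ric_{q_1}(\xi,\xi)\geq\lambda-a^2/2$, so any $0<a^2\leq\lambda$ gives $\Ric(q_1)\geq\min\{\lambda/2,a^2\}\,q_1$.
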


\begin{proof}
	Take the K\"ahler metric from \autoref{lem:positive-ricci-base}. Choose a real connection form $\theta$ on $\widehat X_1^5$, with fibre period $2\pi$, so that $d\theta=p_1^*\omega$. Such a connection exists because $[\omega/(2\pi)]=c_1(L)$. For $a>0$, define
	\begin{equation*}
		q_1=p_1^*q+a^2\theta^2.
	\end{equation*}
	For a horizontal unit vector $\xi$ and the vertical unit vector $U$, the connection-metric Ricci formulas \cite{ONeill}, together with \cite[Proposition 3.6]{HST}, give
	\begin{align*}
		\Ric_{q_1}(\xi,\xi)&=\Ric_q(\xi,\xi)-\frac{a^2}{2}|\iota_\xi\omega|^2,\\
		\Ric_{q_1}(U,U)&=\frac{a^2}{2}|\omega|^2,\\
		\Ric_{q_1}(\xi,U)&=0.
	\end{align*}
	The last identity follows from $\delta\omega=0$, since the K\"ahler form is parallel. For a two-form $F$, use the norm $|F|^2=\sum_{i<j}F_{ij}^2$. Then $|\omega|^2=2$ and $|\iota_\xi\omega|^2=1$. If $\Ric(q)\geq\lambda q$, choose $0<a^2\leq\lambda$. Then
	\begin{equation*}
		\Ric(q_1)\geq\min\{\lambda/2,a^2\}\,q_1>0.
	\end{equation*}
	The radius $a$ is fixed from now on.

	The map $L\to L^{\otimes r}$, $\xi\mapsto\xi^{\otimes r}$, identifies $\widehat X_r^5$ with the quotient of $\widehat X_1^5$ by its free subgroup $\mathbb Z/r\subset S^1$. The metric $q_1$ is invariant under the circle. These finite quotients are local isometries in orbifold charts, so their sectional and Ricci bounds are the same as those of $q_1$.
\end{proof}

We now assemble the smooth collapsing sequences. Use the manifold $M_+^7$ and quotient map $M_+^7\to\mathcal Q_+^4$ constructed in the proof of \autoref{thm:simply-connected}. Pull the circle bundle $p_1:\widehat X_1^5\to Q_+^4$ back to $M_+^7$ and denote the resulting total space by $M_0^8$. This is a closed smooth manifold. Over the distinguished orbifold chart its model is
\begin{equation*}
	I^*\backslash(D^4\times G\times S^1),
\end{equation*}
where $I^*$ acts diagonally on the first two factors and trivially on the last. Thus $M_0^8$ carries a locally free $G$-action and a free commuting circle action, with
\begin{equation}\label{eq:commuting-quotients}
	M_0^8/G=\widehat{\mathcal X}_1^5,\qquad S^1\backslash M_0^8=M_+^7.
\end{equation}

\begin{lemma}\label{lem:common-cover}
	The manifold $M_0^8$ is $2$-connected, and its quotient by any finite subgroup of the circle has $M_0^8$ as its universal covering manifold.
\end{lemma}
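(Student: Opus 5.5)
We show that $M_0^8$ is simply connected and that $\pi_2(M_0^8)=0$. The covering statement then follows formally. The finite subgroup $\mathbb Z/r\subset S^1$ acts freely, since the circle action on $M_0^8$ is free by \eqref{eq:commuting-quotients}. Hence $M_0^8\to(\mathbb Z/r)\backslash M_0^8$ is a regular covering, and it is universal once $M_0^8$ is simply connected.

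For the homotopy groups we use the principal circle bundle $S^1\to M_0^8\to M_+^7$ from \eqref{eq:commuting-quotients}. Its long exact sequence contains
\begin{equation*}
	\pi_2(S^1)=0\to\pi_2(M_0^8)\to\pi_2(M_+^7)\xrightarrow{\partial}\pi_1(S^1)\cong\mathbb Z\to\pi_1(M_0^8)\to\pi_1(M_+^7)=0,
\end{equation*}
where $\pi_1(M_+^7)=0$ was shown in the proof of \autoref{thm:simply-connected}. So it suffices to prove that $\partial$ is an isomorphism. This splits into two claims: $\pi_2(M_+^7)\cong\mathbb Z$, and $\partial$ maps a generator to $\pm1$.

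First, $M_+^7$ is simply connected, so Hurewicz gives $\pi_2(M_+^7)\cong H_2(M_+^7;\mathbb Z)$. We compute $H_2$ by Mayer--Vietoris on the decomposition \eqref{eq:smooth-total-space}, $M_+^7=(W_+^4\times G)\cup M_{\mathrm{loc}}^7$, with intersection $H^3\times G$. Here $G\cong S^3$ and $W_+^4\simeq S^2$ by \autoref{lem:fano-topology}. Also $M_{\mathrm{loc}}^7\simeq I^*\backslash G$, which has $H_1=H_2=0$ because $I^*$ is perfect and $I^*\backslash G$ is a homology sphere. Likewise $H^3\times S^3$ has $H_1=H_2=0$. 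Mayer--Vietoris therefore gives $H_2(M_+^7)\cong H_2(W_+^4\times G)\cong\mathbb Z$, generated by the sphere $S^2\to W_+^4\hookrightarrow W_+^4\times\{e\}$.

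Second, the boundary map $\partial$ evaluates the Euler class of the pulled-back circle bundle on this generator. The bundle is pulled back from $p_1$, whose Euler class is $u=c_1(L)$. So its Euler class on $M_+^7$ is $\pi^*u$, and we need $\langle\pi^*u,[S^2]\rangle=\langle u,\pi_*[S^2]\rangle=\pm1$. The composite $S^2\to W_+^4\subset Q_+^4$ generates $H_2(Q_+^4)\cong\mathbb Z$, by the Mayer--Vietoris computation in \autoref{lem:fano-topology}. Moreover, $\langle u^2,[Q_+^4]\rangle=1$ together with Poincaré duality forces $u$ to generate $H^2(Q_+^4)$. Hence the pairing is $\pm1$, $\partial$ is an isomorphism, and both $\pi_1(M_0^8)=0$ and $\pi_2(M_0^8)=0$ follow.

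The main point to check is that the generator of $\pi_2(M_+^7)$ projects to a generator of $H_2(Q_+^4)$. Since the sphere lies in $W_+^4\times\{e\}$ and $\pi$ restricts to the projection there, this reduces to the identification $H_2(W_+^4)\cong H_2(Q_+^4)$, which holds because $H^3$ is a homology sphere.
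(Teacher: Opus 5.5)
Your proposal is correct and follows the paper's proof essentially step for step: Mayer--Vietoris on the decomposition \eqref{eq:smooth-total-space}, using $H_1=H_2=0$ for $H^3\times S^3$ and for $M_{\mathrm{loc}}^7\simeq H^3$, gives $\pi_2(M_+^7)\cong H_2(W_+^4)\cong\mathbb Z$; the Euler class $\pi^*u$ pairs to $\pm1$ with a generator; and the homotopy sequence of $S^1\to M_0^8\to M_+^7$ then kills both $\pi_1$ and $\pi_2$. Your justification of the pairing, via $H_2(W_+^4)\cong H_2(Q_+^4)$ and the fact that $u$ generates $H^2(Q_+^4)$ because $\langle u^2,[Q_+^4]\rangle=1$, simply spells out what the paper states in one line.
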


\begin{proof}
	Let $\pi:M_+^7\to Q_+^4$ be the quotient map. In the decomposition \eqref{eq:smooth-total-space}, the intersection is $H^3\times S^3$, whose first and second integral homology vanish, and $M_{\mathrm{loc}}^7\simeq H^3$. The Mayer--Vietoris sequence therefore gives
	\begin{equation*}
		H_2(M_+^7;\mathbb Z)\cong H_2(W_+^4;\mathbb Z)\cong\mathbb Z.
	\end{equation*}
	Under this identification, $\pi^*u$ pairs to one with a generator, since $u|_{W_+^4}$ does. The manifold $M_+^7$ is simply connected, so the Hurewicz theorem identifies its second homotopy group with this homology group. The Euler class of $M_0^8\to M_+^7$ is $\pi^*u$, and the homotopy sequence contains
	\begin{equation*}
		0\longrightarrow\pi_2(M_0^8)\longrightarrow\mathbb Z
		\xrightarrow{\cong}\mathbb Z\longrightarrow\pi_1(M_0^8)\longrightarrow0.
	\end{equation*}
	Hence $\pi_1(M_0^8)=\pi_2(M_0^8)=0$. In particular, for every $m\geq1$ the quotient map $M_0^8\to (\mathbb Z/(2m))\backslash M_0^8$ is the universal covering map.
\end{proof}

\begin{proof}[Proof of \autoref{thm:lens-family}]
	Apply \autoref{prop:collapse-metrics} to $M_0^8\to\widehat{\mathcal X}_1^5$, using the positive-Ricci metric $q_1$ of \autoref{lem:circle-ricci}. Average the connection over the commuting circle, so that every $g_t$ is circle invariant. For each $m\geq1$, let
	\begin{equation*}
		M_m^8=(\mathbb Z/(2m))\backslash M_0^8,\qquad X_m^5=\widehat X_{2m}^5.
	\end{equation*}
	The finite circle action on $M_0^8$ is free, hence $M_m^8$ is a closed smooth manifold. Its induced metrics $g_{m,t}$ have the same local sectional and Ricci bounds as $g_t$. The two actions in \eqref{eq:commuting-quotients} commute, so the remaining $G$-quotient is $(X_m^5,q_{2m})$. Its fibres still have intrinsic diameter at most $\pi t$, giving
	\begin{equation*}
		d_{\mathrm{GH}}((M_m^8,g_{m,t}),(X_m^5,q_{2m}))\leq\frac{\pi t}{2}
	\end{equation*}
	uniformly in $m$.

	Choose $\Lambda\geq\max\{K,1\}$, with $K$ as in \eqref{eq:collapse-estimates}, and multiply every metric by $\Lambda$. Sectional curvature is then at least $-1$, and the positive Ricci lower bound becomes $c=c_0/\Lambda>0$. Choose any sequence $t_j\downarrow0$ in the permitted interval and set $g_{m,j}=\Lambda g_{m,t_j}$ and $d_m=\sqrt\Lambda\,d_{q_{2m}}$. The constant $c$ is independent of $m,j$, and the convergence follows from the preceding estimate. The topological assertions follow from \autoref{prop:lens-topology} and \autoref{lem:common-cover}. Since $\pi_1(X_m^5)=\mathbb Z/(2m)$, different values of $m$ give nonhomeomorphic spaces.
\end{proof}

The product construction has a spin analogue obtained by replacing the sphere factor with a sphere bundle.

\begin{proposition}\label{prop:spin-examples}
	For every $n\geq6$, there are a closed simply connected spin smooth manifold $M_{\mathrm{sp}}^{n+3}$, a constant $c_n>0$, and metrics $g_j$ such that
	\begin{equation*}
		\begin{gathered}
		\sec(g_j)\geq-1,\qquad \Ric(g_j)\geq c_n g_j,\\
		(M_{\mathrm{sp}}^{n+3},g_j)\xrightarrow{\mathrm{GH}}X_{\mathrm{sp}}^n.
		\end{gathered}
	\end{equation*}
	Here $X_{\mathrm{sp}}^n$ is a compact Alexandrov space whose underlying space is a closed simply connected topological manifold satisfying
	\begin{equation*}
		w_2(X_{\mathrm{sp}}^n)=0,\qquad \operatorname{ks}(X_{\mathrm{sp}}^n)\ne0.
	\end{equation*}
	The class $w_2(X_{\mathrm{sp}}^n)$ is that of its stable topological tangent bundle.
\end{proposition}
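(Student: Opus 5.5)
Following the hint in the introduction, I replace the product $M_+^7\times S^k$ by a sphere bundle twisted by the Cartier line bundle $L$. Put $k=n-4\geq2$ and let $E=L\oplus\underline{\mathbb R}^{k-1}$, a real rank-$(k+1)$ vector bundle over $Q_+^4$. Since $L$ is Cartier, the local $I^*$-action on its fibres is trivial, so $E$ is a genuine topological bundle and an orbifold bundle with trivial isotropy action. Let $S(E)\to Q_+^4$ be its unit sphere bundle; this is $X_{\mathrm{sp}}^n$. Pull it back to $M_+^7$ via $\pi$ to get $M_{\mathrm{sp}}^{n+3}=S(\pi^*E)$. Over the distinguished chart its model is $I^*\backslash(D^4\times G\times S^k)$ with trivial action on $S^k$. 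So $M_{\mathrm{sp}}^{n+3}$ is a closed smooth manifold with a locally free $G$-action and quotient orbifold $S(E)$, exactly as for $M_0^8$ in \eqref{eq:commuting-quotients}.

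\textbf{Metrics.} I first give $S(E)$ an orbifold metric of positive Ricci curvature. Take the Kähler metric $q$ of \autoref{lem:positive-ricci-base}, a Hermitian metric on $L$ with its Chern connection, and the trivial connection on the trivial summand. Then $S(E)$ is an associated bundle with structure group $U(1)\subset SO(k+1)$, and I use the connection metric $p^*q+\delta^2 g_{S^k}$ with round fibres. Since $k\geq2$, the round fibre has positive Ricci curvature. For small $\delta$ the O'Neill $A$-tensor terms are $O(\delta^2)$, so the horizontal Ricci curvature is $\Ric(q)-O(\delta^2)$ and the vertical Ricci curvature is at least $(k-1)\delta^{-2}$. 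Hence $\Ric>0$; this is the same computation as in \autoref{lem:circle-ricci}, or a special case of the positive Ricci lifting in \cite{SearleWilhelm2015}. I then apply \autoref{prop:collapse-metrics} to $M_{\mathrm{sp}}^{n+3}\to S(E)$ and rescale as in the proof of \autoref{thm:lens-family} to obtain $\sec\geq-1$ and $\Ric\geq c_n g_j$. The constant $c_n$ depends on $n$ through $\delta$.

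\textbf{Topology.} By \eqref{eq:local-double-suspension}, the singular stratum $S^k$ over $v$ has neighborhood $C(H^3)\times S^k$, attached to the smooth complement along $H^3\times S^k$. So $X_{\mathrm{sp}}^n$ is a topological manifold, and \autoref{lem:link-obstruction} gives $\operatorname{ks}\cap[X]$ equal to the class of the fibre over $v$. For a sphere bundle with $k\geq2$ over a base with $H^4\cong\mathbb Z$, the Gysin sequence shows the fibre is nonzero in $H_k(\,\cdot\,;\mathbb Z/2)$: its Euler class lies in $H^{k+1}$, which vanishes unless $k=3$. When $k=3$ the Euler class vanishes anyway because $E$ has a nowhere-zero section, the trivial summand. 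Simple connectivity follows from the homotopy sequences, since $Q_+^4$ and $M_+^7$ are simply connected and $k\geq2$.

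\textbf{Spin conditions.} For $X$, the stable tangent bundle is $p^*(TQ\oplus E)$, at least over the smooth part, and it extends stably as topological bundles by Lemma~\ref{lem:fano-topology}. Here I use that $TQ$ pulled to $\mathbb{CP}^2$ behaves stably with $w_2=u\bmod2$, and $w_2(E)=w_2(L)=u\bmod 2$. So $w_2(X)=p^*(2u)=0$. This stable-tangent identification across the singular point is where I expect the main difficulty. I would handle it via the homeomorphism $Q_+^4\times\mathbb R\cong M_{\mathrm{cap}}^4\times\mathbb R$ coming from the cell-like approximation, for which $w_2(M_{\mathrm{cap}}^4)$ is the mod-2 reduction of the characteristic element, hence $u\bmod2$ for form $\langle1\rangle$. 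For $M_{\mathrm{sp}}$, the tangent bundle is stably $\pi'^*(\pi^*TQ\oplus\mathfrak g\oplus\pi^*E)$ off the special fibre, with $\mathfrak g$ trivial. So $w_2=\pi^*(u+u)=0$, using that $H^2(M_+^7;\mathbb Z/2)\cong\mathbb Z/2$ restricts injectively to $W_+^4\times G$. Orientability of both spaces comes from the complex orientation together with orientations of $L$ and of the fibres.
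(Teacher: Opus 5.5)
Your construction, the round-fibre metric on $S(E)$ followed by \autoref{prop:collapse-metrics}, the Gysin and link-formula detection of $\operatorname{ks}$, and simple connectivity all agree with the paper. In the Ricci step you should also note that the mixed horizontal--vertical term is $O(\delta)$ in an orthonormal frame, as in \eqref{eq:ricci-block}: positive diagonal blocks alone do not give positivity, and the paper simply cites Boucetta--Essoufi.

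The real difference is the spin verification. For $X_{\mathrm{sp}}^n$ the paper never needs a tangent bundle on the singular base. It computes $w_2$ only on $X_{\mathrm{reg}}^n$, where the sum formula gives $w_2=p^*\bigl((u+u)\bmod 2\bigr)=0$. It then notes that $H^i(X_{\mathrm{sp}}^n,X_{\mathrm{reg}}^n;\mathbb Z/2)\cong H^{i-4}(S^k;\mathbb Z/2)$ vanishes for $i\le 3$, because $H^3$ is a homology sphere, so restriction is injective on $H^2$.

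You instead manufacture a stable tangent bundle for the base from $Q_+^4\times\mathbb R\cong M_{\mathrm{cap}}^4\times\mathbb R$, using Siebenmann's theorem in dimension five. You then apply the microbundle tangent formula to the sphere bundle $S(E)\times\mathbb R$ over this manifold, and get $w_2$ of the base from Wu's formula for the odd form $\langle1\rangle$. That route is sound. Your first justification, via \autoref{lem:fano-topology}, is not: a homotopy equivalence with $\mathbb{CP}^2$ carries no tangent data, and $Q_+^4$ is not a manifold, so it has no tangent microbundle.

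Your route buys a global stable splitting of the tangent microbundle of $X_{\mathrm{sp}}^n$ as $p^*$ of (base bundle)$\,\oplus\,E$, from which every characteristic class is computable, not just $w_2$. It costs cell-like approximation, the topological Wu formula, and the tangent formula over topological manifolds, where the paper needs only excision.

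For the source the roles reverse. You use Mayer--Vietoris injectivity of $H^2(M_+^7;\mathbb Z/2)\to H^2(W_+^4\times G;\mathbb Z/2)$, which is correct and self-contained. The paper instead uses the global splitting of $TM_+^7$ into the trivial vertical bundle and the pullback of the orbifold tangent bundle with $c_1=\pi^*u$. Applying your own injectivity argument to the pair $(X_{\mathrm{sp}}^n,X_{\mathrm{reg}}^n)$ would have dissolved the difficulty you flagged.
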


\begin{proof}
	Put $k=n-4\geq2$ and retain $L$, $u=c_1(L)$, and $\pi:M_+^7\to\mathcal Q_+^4$ from the preceding construction. Let $L_{\mathbb R}$ be the underlying oriented real plane bundle of $L$, and write $S(E)$ for the sphere bundle of a real vector bundle $E$. Define
	\begin{equation*}
		\begin{gathered}
		E_k=\underline{\mathbb R}^{\,k-1}\oplus L_{\mathbb R},\\
		X_{\mathrm{sp}}^n=S(E_k),\qquad M_{\mathrm{sp}}^{n+3}=S(\pi^*E_k).
		\end{gathered}
	\end{equation*}
	Write $p:X_{\mathrm{sp}}^n\to Q_+^4$ and $a:M_{\mathrm{sp}}^{n+3}\to M_+^7$ for the sphere-bundle projections. Since $L$ is Cartier, $E_k$ is an ordinary vector bundle on the coarse space and has trivial local isotropy action at $v$. Thus $\pi^*E_k$ is a smooth vector bundle, and the $G$-action lifts to a locally free action on $M_{\mathrm{sp}}^{n+3}$ with orbifold quotient $S(E_k)$. A neighborhood of the singular fibre $\Sigma^k=p^{-1}(v)$ is $C(H^3)\times S^k$. Hence \eqref{eq:local-double-suspension} makes $X_{\mathrm{sp}}^n$ a topological manifold. Both sphere bundles have simply connected base and fibre, so both total spaces are simply connected.

	The bundle $E_k$ has a nowhere-zero section, so its Euler class vanishes. The mod-$2$ Gysin sequence therefore makes fibre integration $H^k(X_{\mathrm{sp}}^n;\mathbb Z/2)\to H^0(Q_+^4;\mathbb Z/2)$ surjective. In particular, $[\Sigma^k]\ne0$ in $H_k(X_{\mathrm{sp}}^n;\mathbb Z/2)$. The complement of a cone neighborhood is smooth, and \autoref{lem:link-obstruction} gives
	\begin{equation*}
		\operatorname{ks}(X_{\mathrm{sp}}^n)\cap[X_{\mathrm{sp}}^n]=[\Sigma^k]\ne0.
	\end{equation*}

	Set $\bar u=u\bmod2$ and $X_{\mathrm{reg}}^n=X_{\mathrm{sp}}^n\setminus\Sigma^k$. On the regular part, the stable tangent formula for a sphere bundle gives
	\begin{equation*}
		\begin{aligned}
		w_2(X_{\mathrm{reg}}^n)
		&=p^*\bigl(w_2(T(Q_+^4\setminus\{v\}))+w_2(E_k)\bigr)\\
		&=p^*(\bar u+\bar u)=0.
		\end{aligned}
	\end{equation*}
	Here we used $c_1(T\mathcal Q_+^4)=u$. Excision and the local model $C(H^3)\times S^k$ show that $H^i(X_{\mathrm{sp}}^n,X_{\mathrm{reg}}^n;\mathbb Z/2)=0$ for $i=2,3$, since $H^3$ is a homology sphere. Thus restriction is an isomorphism in degree two, and $w_2(X_{\mathrm{sp}}^n)=0$ globally. For the source, the vertical tangent bundle of $M_+^7\to\mathcal Q_+^4$ is trivial, while the horizontal bundle is the pullback of the orbifold tangent bundle. Consequently $w_2(M_+^7)=\pi^*\bar u$, and the sphere-bundle tangent formula gives
	\begin{equation*}
		w_2(M_{\mathrm{sp}}^{n+3})
		=a^*\bigl(w_2(M_+^7)+w_2(\pi^*E_k)\bigr)=0.
	\end{equation*}
	The source is orientable and hence spin.

	Equip $E_k$ with a Euclidean metric and a compatible orbifold connection. Give $S(E_k)$ the connection metric with horizontal metric $q$ from \autoref{lem:positive-ricci-base} and round fibres of radius $\varepsilon$. Since $k\geq2$, this metric has positive Ricci curvature for all sufficiently small $\varepsilon>0$ by \cite[Theorem 3.4]{BoucettaEssoufi2020}. The local estimates in \cite[Proposition 2.4]{BoucettaEssoufi2020} apply in finite orbifold charts, and compactness makes the choice of $\varepsilon$ uniform. Fix such a radius and apply \autoref{prop:collapse-metrics} to the lifted $G$-action on $M_{\mathrm{sp}}^{n+3}$. Shrinking the $G$-orbits gives the asserted collapse, and one constant rescaling gives $\sec\geq-1$ with a common positive Ricci lower bound.
\end{proof}

The homotopy equivalence $f:Q_+^4\to\mathbb{CP}^2$ in \autoref{lem:fano-topology} satisfies $L\cong f^*\mathcal O_{\mathbb{CP}^2}(1)$ as topological line bundles. The induced map of sphere bundles is a homotopy equivalence, so $X_{\mathrm{sp}}^n$ has the homotopy type of
\begin{equation*}
	S\!\left(\underline{\mathbb R}^{\,n-5}\oplus\mathcal O_{\mathbb{CP}^2}(1)_{\mathbb R}\right).
\end{equation*}

\section{Aspherical collapsing limits}\label{sec:aspherical}

Cap the negative $E_8$ plumbing itself to obtain the closed homology four-manifold
\begin{equation*}
	Q_E^4=W_E^4\cup_{H^3}C(H^3).
\end{equation*}
It has intersection form $-E_8$ and one cone point with link $H^3$. Reverse its orientation so that the input to hyperbolization has signature $8$, as in \cite{DJ}. The specific Davis--Januszkiewicz hyperbolization in \cite[Lemma 5a.3]{DJ} produces a compact polyhedral homology four-manifold $Q_h^4$ with the following properties: it is aspherical and admits a locally $\operatorname{CAT}(0)$ metric; it is a PL manifold away from a single vertex $v_h$ with link the underlying Poincar\'e sphere $H^3$; and
\begin{equation}\label{eq:hyperbolization-invariants}
	w_1(Q_h^4)=w_2(Q_h^4)=0,\qquad\sigma(Q_h^4)=8.
\end{equation}
We use these properties of the particular construction, including its single-vertex local model.

\begin{lemma}\label{lem:hyperbolized-orbifold}
	The underlying space $Q_h^4$ admits a smooth effective orbifold structure $\mathcal Q_h^4$ whose only singular chart is $I^*\backslash D^4$.
\end{lemma}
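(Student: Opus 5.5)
The plan follows the outline in the introduction: remove a cone neighborhood of $v_h$, smooth the PL complement, and glue back the standard orbifold cone $I^*\backslash D^4$.

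\emph{Step 1: the local structure at $v_h$.} Since $Q_h^4$ is a polyhedron whose link at $v_h$ is a PL $3$-manifold PL homeomorphic to the Poincar\'e sphere, the closed star of $v_h$ in a fine subdivision is PL homeomorphic to the cone $C(H^3)$. Let $W_h^4$ be the closure of its complement. Then $W_h^4$ is a compact PL $4$-manifold with boundary PL homeomorphic to $H^3$, and
\begin{equation*}
	Q_h^4=W_h^4\cup_{H^3}C(H^3).
\end{equation*}

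\emph{Step 2: smoothing the complement.} PL $4$-manifolds admit smoothings, since the obstructions lie in $H^{i+1}(\,\cdot\,;\pi_i(PL/O))$ and $PL/O$ is $6$-connected. Moreover, smoothings of PL $4$-manifolds are unique up to isotopy in the relative sense. I would therefore smooth $W_h^4$ relative to its boundary, after fixing on $\partial W_h^4$ the standard smooth structure of $H^3=I^*\backslash S^3$. Three-manifolds have unique smooth and PL structures, so the boundary identification with $I^*\backslash S^3$ can be taken to be a diffeomorphism. The result is a compact smooth manifold $W_h^4$ whose boundary is diffeomorphic to $I^*\backslash S^3$, with a collar.

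\emph{Step 3: gluing the orbifold cone.} Attach the smooth orbifold $I^*\backslash D^4$ along a diffeomorphism $\partial(I^*\backslash D^4)=I^*\backslash S^3\to\partial W_h^4$, using collars. The result is a smooth effective orbifold $\mathcal Q_h^4$ with a single singular chart $I^*\backslash D^4$. Its underlying space is homeomorphic to $Q_h^4$: the coarse space of $I^*\backslash D^4$ is the cone $C(I^*\backslash S^3)$, and the homeomorphism is radial on the cone and the identity on $W_h^4$. Orientations should be chosen compatibly, reversing the identification if needed, since $I^*\backslash S^3$ carries both orientations up to diffeomorphism in the relevant sense.

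\emph{Main obstacle.} The delicate point is the boundary matching in Step 2. The gluing map must be a diffeomorphism onto the standard spherical space form $I^*\backslash S^3$ with the orientation induced from $D^4$. I would argue as follows. Any PL homeomorphism between smooth $3$-manifolds is isotopic to a diffeomorphism, by Moise and Cerf--Munkres. The orientation issue is harmless, because only the underlying space is asserted, and if necessary one can compose with an orientation-reversing homeomorphism of $Q_h^4$'s local cone. If the relative smoothing is awkward to arrange, I would instead smooth $W_h^4$ arbitrarily and then use uniqueness of smooth structures on $3$-manifolds to identify its boundary diffeomorphically with $I^*\backslash S^3$.
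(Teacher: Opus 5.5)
Your proposal is correct and follows the paper's proof essentially step for step. You excise the PL cone neighborhood of $v_h$, smooth the PL complement $W_h^4$, identify its boundary smoothing with $I^*\backslash S^3$ by uniqueness of structures on $3$-manifolds, glue in $I^*\backslash D^4$ along collars, and compare with $Q_h^4$ via the identity on $W_h^4$ together with the radial extension of the boundary identification over the cone. Your orientation remarks are unnecessary and partly wrong, since the Poincar\'e sphere is chiral and so has no orientation-reversing self-diffeomorphism (nor its cone a self-homeomorphism) to compose with. Nothing depends on them, because gluing along an arbitrary diffeomorphism $\phi\colon I^*\backslash S^3\to\partial W_h^4$ already gives a smooth orbifold, and the identity on $W_h^4$ together with the cone on $\phi$ is a homeomorphism onto $W_h^4\cup C(\partial W_h^4)=Q_h^4$.
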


\begin{proof}
	Remove the interior of a closed PL cone neighborhood of $v_h$. Its complement $W_h^4$ is a compact PL four-manifold with boundary PL-homeomorphic to $H^3$. It admits a compatible smoothing, and its induced boundary smoothing is diffeomorphic to the standard spherical manifold $I^*\backslash S^3$. These low-dimensional smoothing facts are recalled in \cite[Introduction and Section 2.2]{Lange}.

	Choose such a boundary diffeomorphism and glue the standard orbifold ball $I^*\backslash D^4$ to $W_h^4$ using smooth collars. On the overlap both smooth structures are those of an ordinary annulus $H^3\times(0,1)$, so this defines a smooth orbifold. Its underlying space is homeomorphic to $Q_h^4$: the homeomorphism comparing the new boundary identification with the original PL identification extends radially across $C(H^3)$. Together with the identity on $W_h^4$, it gives the required homeomorphism.
\end{proof}

The lower-curvature metric below is chosen on $\mathcal Q_h^4$ after this construction. It is independent of the piecewise-flat locally $\operatorname{CAT}(0)$ metric used to establish asphericity.

\begin{proof}[Proof of \autoref{thm:aspherical}]
	Apply \autoref{lem:smooth-total-space} to $\mathcal Q_h^4$, obtaining a closed smooth manifold $M_h^7$ with locally free $G$-action. Choose any smooth metric $q_h$ on the compact orbifold $\mathcal Q_h^4$. By \autoref{prop:collapse-metrics}, there are metrics on $M_h^7$ with a common sectional lower bound and diameter upper bound converging to $(Q_h^4,q_h)$.

	For $k=n-4\geq1$, take a fixed flat torus $T^k$ and put
	\begin{equation*}
		M^{n+3}=M_h^7\times T^k,\qquad X^n=Q_h^4\times T^k.
	\end{equation*}
	The product metrics have the required convergence and sectional curvature and diameter bounds after one rescaling. By \eqref{eq:scalar-collapse}, their scalar curvatures tend uniformly to $+\infty$; taking a product with a flat torus and applying a fixed rescaling preserve this conclusion. By \eqref{eq:local-double-suspension}, $X^n$ is a closed topological manifold. It is orientable, and a neighborhood of its singular stratum is $C(H^3)\times T^k$, with smooth complement. Hence
	\begin{equation*}
		\operatorname{ks}(X^n)\cap[X^n]=[\{v_h\}\times T^k]\ne0
	\end{equation*}
	by \autoref{lem:link-obstruction} and projection to the torus. Asphericity follows from that of both factors. The product of the original locally $\operatorname{CAT}(0)$ metric on $Q_h^4$ with the flat torus metric gives the separate locally $\operatorname{CAT}(0)$ metric asserted in the theorem.

	Replacing the cone neighborhood in $Q_h^4$ by a contractible topological four-manifold gives the closed aspherical manifold $M_{\mathrm{top}}^4$ of \cite[Theorem 5a.1]{DJ}. The same construction gives a homotopy equivalence $M_{\mathrm{top}}^4\simeq Q_h^4$. Consequently
	\begin{equation*}
		X^n=Q_h^4\times T^k\simeq M_{\mathrm{top}}^4\times T^k.
	\end{equation*}
	By \cite[Corollary 5a.4]{DJ}, this homotopy type has no closed PL representative for any $k\geq1$. The Ricci obstruction follows from \autoref{prop:aspherical-ricci-obstruction} below.
\end{proof}

The final assertion of \autoref{thm:aspherical} follows from the next application of the generalized Margulis lemma and the fundamental-group surjection theorem for Gromov--Hausdorff limits.

\begin{proposition}\label{prop:aspherical-ricci-obstruction}
	Let $(X^n,d)$ be a compact length space whose underlying space is a closed connected aspherical topological manifold with no closed PL representative of its homotopy type. For every fixed integer $N\geq2$, there is no sequence of closed Riemannian manifolds $(M_j^N,g_j)$ such that
	\begin{equation*}
		\begin{gathered}
		(M_j^N,g_j)\xrightarrow{\mathrm{GH}}(X^n,d),\\
		\Ric(g_j)\geq-\varepsilon_jg_j,\qquad
		\varepsilon_j\geq0,\quad\varepsilon_j\longrightarrow0.
		\end{gathered}
	\end{equation*}
\end{proposition}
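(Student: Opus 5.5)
The plan is to argue by contradiction: a sequence as in the statement would force $\pi_1(X^n)$ to be virtually nilpotent, and then $X^n$ would be homotopy equivalent to a smooth infranilmanifold, which contradicts the hypothesis that its homotopy type has no closed PL representative. So suppose $(M_j^N,g_j)\to(X^n,d)$ with $\Ric(g_j)\geq-\varepsilon_jg_j$ and $\varepsilon_j\to0$.

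\textbf{Bounded diameter and normalisation.} Gromov--Hausdorff convergence to a compact space gives $\operatorname{diam}(M_j^N,g_j)\leq D$ for all large $j$. Rescale each metric so that its diameter is at most one. The rescaled metrics satisfy $\Ric\geq-\varepsilon_jD^2$, so they still have almost nonnegative Ricci curvature in the fixed dimension $N$, and they converge to a rescaling of $X^n$.

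\textbf{Fundamental-group surjection.} The underlying space of $X^n$ is a closed topological manifold, hence locally contractible and in particular semilocally simply connected. The surjection theorem of Sormani--Wei for Gromov--Hausdorff limits under a lower Ricci bound and an upper diameter bound then gives surjections $\pi_1(M_j^N)\twoheadrightarrow\pi_1(X^n)$ for all large $j$. This uses the fact that the revised and universal covers of a semilocally simply connected limit agree.

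\textbf{Virtual nilpotence.} By the generalized Margulis lemma of Kapovitch--Wilking, there is $\varepsilon(N)>0$ such that any closed $N$-manifold with $\Ric\geq-\varepsilon(N)$ and $\operatorname{diam}\leq1$ has a nilpotent subgroup of index at most $C(N)$ in its fundamental group. Apply this to $M_j^N$ for $j$ large. Quotients of virtually nilpotent groups are virtually nilpotent, so $\Gamma=\pi_1(X^n)$ is finitely generated and virtually nilpotent. The fixed dimension $N$ enters exactly here, through $\varepsilon(N)$ and $C(N)$.

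\textbf{Identifying the homotopy type.} Since $X^n$ is a closed aspherical $n$-manifold, $\Gamma$ has a finite $K(\Gamma,1)$ of dimension $n$. Hence $\Gamma$ is torsion-free with cohomological dimension exactly $n$, the top value being forced by $H^n(X^n;\mathbb Z/2)\neq0$. A finitely generated torsion-free virtually nilpotent group is the fundamental group of a closed infranilmanifold $N_\Gamma$, by Auslander and Lee--Raymond. Here $\dim N_\Gamma$ is the Hirsch length of $\Gamma$, which equals $\operatorname{cd}\Gamma=n$.

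Both $N_\Gamma$ and $X^n$ are aspherical with fundamental group $\Gamma$, so they are homotopy equivalent, since Eilenberg--MacLane spaces are unique up to homotopy. But $N_\Gamma$ is a closed smooth manifold, hence PL, which contradicts the hypothesis.

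\textbf{Main obstacle.} I expect the hardest part to be checking that the cited results apply as stated. For the surjection theorem, one needs the version with only a lower Ricci bound and no volume noncollapsing, applied to a limit that is merely a length space. For the realization step, one needs the existence of an infranilmanifold model for a torsion-free virtually nilpotent group, together with the equality of its dimension with the cohomological dimension.
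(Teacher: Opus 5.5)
Your proposal is correct and follows essentially the same route as the paper: the Sormani--Wei surjection $\pi_1(M_j^N)\twoheadrightarrow\pi_1(X^n)$, the Kapovitch--Wilking generalized Margulis lemma for virtual nilpotence, and then the Lee--Raymond infranilmanifold realization together with uniqueness of $K(\pi,1)$ spaces to contradict the non-PL hypothesis. The only difference is cosmetic. You normalize to $\operatorname{diam}\leq1$ with $\Ric\geq-\varepsilon_jD^2\to0$, whereas the paper rescales by $a_j=\varepsilon_j+j^{-1}$ to get $\Ric>-1$ with diameters tending to zero; these are equivalent forms of the Kapovitch--Wilking corollary.
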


\begin{proof}
	Suppose that such a sequence exists. Its diameters are bounded because the limit is compact. Put $a_j=\varepsilon_j+j^{-1}$ and $\widetilde g_j=a_jg_j$. Then
	\begin{equation*}
		\Ric(\widetilde g_j)>-\widetilde g_j,\qquad
		\operatorname{diam}(M_j^N,\widetilde g_j)\longrightarrow0.
	\end{equation*}
	The generalized Margulis lemma \cite[Corollary 2]{KapovitchWilking2011} implies that $\pi_1(M_j^N)$ is virtually nilpotent for all sufficiently large $j$. Since $X^n$ is a compact topological manifold, \cite[Theorem 2.1]{SormaniWei2001} gives an epimorphism $\pi_1(M_j^N)\twoheadrightarrow\pi_1(X^n)$ for all sufficiently large $j$. Hence $\pi_1(X^n)$ is virtually nilpotent as well.

	The fundamental group of a closed aspherical manifold is finitely generated and torsion-free. The realization theorem of Lee--Raymond \cite{LeeRaymond1985}, as used in \cite[proof of Corollary 7.1]{KapovitchWilking2011}, identifies it with the fundamental group of a closed smooth infranilmanifold. Both spaces are $K(\pi,1)$ spaces, so they are homotopy equivalent; the infranilmanifold has dimension $n$, as follows also from cohomological dimension. This contradicts the assumed absence of a closed PL representative of the homotopy type of $X^n$.
\end{proof}

\begin{remark}\label{rem:diameter-normalization}
	In \autoref{thm:aspherical}, the diameter bound is a finite constant supplied by the construction. The proof does not assert the existence of these aspherical examples for every prescribed positive diameter bound while keeping the sectional curvature normalization $\sec\geq-1$.
\end{remark}

\begin{center}
\footnotesize{ACKNOWLEDGMENTS}
\end{center}
The second author is very grateful to Professor Burkhard Wilking for his enlightening lectures on advanced differential geometry and for suggesting the study of Cheeger deformations and collapsing theory. He also sincerely thanks Professor Wilking for his warm hospitality and generous help during his stay in M\"{u}nster.

\section*{AI Assistance Disclosure}
ChatGPT (OpenAI) was used during the preparation of this manuscript
to assist with English-language writing and editing, as well as
the organization of relevant literature. The authors take full
responsibility for the correctness and content of the manuscript.

\begingroup
\setlength{\emergencystretch}{3em}
\printbibliography
\endgroup
\end{document}